\documentclass[11pt,reqno]{amsart}
\usepackage{amsmath,amssymb,amsthm,mathtools}
\usepackage[margin=1.15in]{geometry}
\usepackage{graphicx}
\usepackage[colorlinks=true,citecolor=blue,linkcolor=blue]{hyperref}

\theoremstyle{plain}
\newtheorem{theorem}{Theorem}[section]
\newtheorem{proposition}[theorem]{Proposition}
\newtheorem{lemma}[theorem]{Lemma}

\newtheorem{conjecture}[theorem]{Conjecture}
\newtheorem{hypothesis}[theorem]{Hypothesis}
\theoremstyle{definition}
\newtheorem{definition}[theorem]{Definition}
\newtheorem{remark}[theorem]{Remark}

\newcommand{\HH}{\mathcal{H}}
\newcommand{\E}{\mathbb{E}}
\newcommand{\PP}{\mathbb{P}}
\newcommand{\abar}{\bar a}

\begin{document}

\title[The lower-bound problem for regular induced subgraphs]
{The lower-bound problem for regular induced subgraphs\\ of type-based random graphs}

\author{Ariel Edgardo Levy}
\thanks{\textit{Email address:} \texttt{ariel.e.levy@gmail.com}}
\date{August 14, 2026}

\begin{abstract}
For a graph $G$ let $F(G)$ denote the largest order of a regular induced subgraph of $G$,
and let $f(n)=\min\{F(G):|V(G)|=n\}$. A problem of Erd\H{o}s, Fajtlowicz and Staton asks
whether $f(n)/\log n\to\infty$. Dyson and McKay have recently proved
$f(n)\le(\sqrt{2e}+o(1))\sqrt n$ via a type-based random model \cite{DM26}. This paper
concerns the opposite direction within the type-based family. We conjecture that every
model of the family satisfies $F(G)\ge(\sqrt{2e}-o(1))\sqrt n$ asymptotically almost
surely, so that $\sqrt{2e}$ is the optimal constant obtainable from the family, and we
prove the corresponding statement at exponent level --- $F(G)\ge n^{1/2-\epsilon}$ ---
conditionally on two explicitly stated hypotheses: a local limit lower bound for
inhomogeneous degree sequences, and a correlation estimate at sublinear overlaps. The
complementary overlap range, including full overlap, requires no correlation hypothesis.
We further record the exact-curvature first-moment computation that independently
identifies the constant $\sqrt{2e}$, including a uniform trace bound on its determinant
correction, and certified exact computations at orders up to $48$ consistent with the
predicted
crossover $F\asymp\min(n^{2/3},\sqrt{n/L})$. This version substantially revises v1; see
the note in Section~\ref{sec:intro}.
\end{abstract}

\maketitle

\section{Introduction}\label{sec:intro}

An induced subgraph is \emph{regular} if all its degrees are equal. For a graph $G$ write
$F(G)$ for the largest order of a regular induced subgraph of $G$, and
$f(n)=\min_{|V(G)|=n}F(G)$. Since a clique and an independent set are both regular,
Ramsey's theorem gives $f(n)\ge\frac12\log_2 n$; a problem of Erd\H{o}s, Fajtlowicz and
Staton \cite{Erdos93,FMRS95} asks whether
\[
  f(n)/\log n\longrightarrow\infty .
\]
On the lower-bound side, the constant $\frac12$ stood for almost ninety years until the
exponential improvement $R(k,k)\le(4-\eta)^k$ of Campos, Griffiths, Morris and
Sahasrabudhe \cite{CGMS23}, sharpened to $R(k,k)\le3.8^{\,k+o(k)}$ by Gupta, Ndiaye, Norin
and Wei \cite{GNNW24}, which yields $f(n)\ge(0.519\ldots)\log_2 n$; the growth remains
logarithmic. The upper bound has seen more movement: Bollob\'as observed
$f(n)\le n^{1/2+o(1)}$; Alon, Krivelevich and Sudakov \cite{AKS08} obtained
$f(n)=O(\sqrt n\log^{3/4}n)$; Dyson and McKay \cite{DM26} first removed the logarithm with
$f(n)\le\sqrt{163n/9}$, and version~3 of \cite{DM26} now proves the following.

\begin{theorem}[Dyson--McKay \cite{DM26}]\label{thm:DM}
$f(n)\le(\sqrt{2e}+o(1))\sqrt n$. Equivalently, writing $N_{\ge k}$ for the least $N$ such
that every graph of order $N$ has a regular induced subgraph of order at least $k$,
$N_{\ge k}\ge\bigl(\tfrac1{2e}-o(1)\bigr)k^2$.
\end{theorem}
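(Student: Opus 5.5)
We prove Theorem~\ref{thm:DM} with the probabilistic method, using a type-based random graph and a first-moment count. Fix $n$ and a set of \emph{types}. Give each vertex $v$ a type $t(v)$; for example, let $t(v)$ be uniform in $\{1,\dots,L\}$, or take parameters $a_v$ drawn from a fixed distribution. Join $u,v$ independently with probability $p(t(u),t(v))$, chosen so that vertices of different types have sharply different expected degrees. The aim is to show that, asymptotically almost surely, no induced subgraph on $k\ge(\sqrt{2e}+\epsilon)\sqrt n$ vertices is regular. It then suffices to bound
\[
  \E\bigl[\#\{S:|S|=k,\ G[S]\text{ regular}\}\bigr]
  =\sum_{|S|=k}\ \sum_{d}\PP\bigl(G[S]\text{ is }d\text{-regular}\bigr)
\]
and show that it tends to $0$. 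The second form of the statement, $N_{\ge k}\ge(\tfrac1{2e}-o(1))k^2$, follows by inverting $k\approx\sqrt{2en}$.

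\textbf{Step 1: exact-curvature first moment.} For $S$ with type profile $(s_1,\dots,s_L)$, the degree of a vertex of type $i$ inside $S$ is roughly Gaussian. Its mean is $\mu_i=\sum_j s_jp(i,j)$ and its variance is of order $k$. Requiring all $k$ degrees to equal $d$ is a local event. Using a local limit theorem for the inhomogeneous degree vector, conditioned on the edge count having the right parity, its probability is roughly
\[
  \prod_v (2\pi\sigma_v^2)^{-1/2}\exp\Bigl(-\sum_v\frac{(d-\mu_{t(v)})^2}{2\sigma_v^2}\Bigr)
\]
times a determinant correction for the dependence through shared edges. In the classical $G(n,1/2)$ computation of McKay--Wormald, this correction contributes a factor like $e^{1/4}$. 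The key is to choose types so that the offsets $(d-\mu_i)^2/\sigma^2$ are forced to be large for every $d$.

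\textbf{Step 2: summing and optimizing.} Summing over $d$, over type profiles, and the $\binom{n}{k}\le (en/k)^k$ choices of $S$, the expectation is roughly $\exp\bigl(k\log(en/k)-\tfrac k2\log(ck)-\text{penalty}\bigr)$. Here $c$ is set by the degree variance and the number of available types. With the right design the type penalty makes the balance come out exactly at $k^2=2en$, i.e.\ $(en/k)\cdot(1/\sqrt{\dots})$ drops below $1$ when $k>\sqrt{2en}$. This is the step that produces the constant $\sqrt{2e}$. One must check that subsets whose profiles concentrate on a few types, making the degrees nearly homogeneous, are still penalized. This pushes the number of types $L$ to grow slowly, in the regime $\sqrt{n/L}$ mentioned in the abstract.

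\textbf{Main obstacle.} The hard part is making the local limit estimate rigorous and uniform in $d$ and in all type profiles, including degenerate ones. In particular, the determinant correction must be bounded by $e^{o(k)}$ uniformly; I would attempt this via a trace bound on the log-determinant of the covariance of the degree vector. The large-deviation tails, where $d$ is far from every $\mu_i$, must also be handled, by a Chernoff bound on the degree sum. I would prove the upper bound on the probability directly by Fourier inversion over the torus $[-\pi,\pi]^k$, in the style of McKay--Wormald's saddle-point method. Only an upper bound is needed, which is easier than an asymptotic formula. The contribution away from the origin is controlled by standard cosine estimates, and near the origin a Gaussian comparison gives the product formula with the determinant factor.
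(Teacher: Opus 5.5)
Your outline has the right overall shape: a first moment over $k$-sets, a factor $\asymp k^{-1/2}$ per vertex from the degree local limit, another $\asymp k^{-1/2}$ per vertex from the types, and a balance at $n\asymp k^2$. But the step that produces $\sqrt{2e}$ is only asserted. ``With the right design the balance comes out exactly at $k^2=2en$'' is the whole content of the theorem, and you never fix a kernel, a type law, or the size of the ``penalty''.

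Compare Remark~\ref{rem:missing}. For $W(x,y)=(x+y)/2$ with types uniform on $[\alpha,1-\alpha]$, the per-vertex factors are $en/k$ from $\binom nk$, $(2\pi\lambda(1-\lambda)k)^{-1/2}$ from $|\HH(k,d)|$, and $(1-2\alpha)^{-1}\sqrt{8\pi\lambda(1-\lambda)/k}$ from the Gaussian integral over the centred types with exact curvature $I=k/(8\lambda(1-\lambda))$. The factor $\lambda(1-\lambda)$ cancels, the base is $2en/((1-2\alpha)k^2)$, and $\sqrt{2e}$ emerges only as $\alpha\to0$.

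The mechanism is also not that offsets are ``forced to be large for every $d$''. Sets whose types lie in a window of width $O(k^{-1/2})$ always have $O(1)$ offsets; what is penalized is the \emph{volume} of such type configurations.

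The one concrete design you suggest, $L$ discrete types with $L$ growing slowly, fails outright. A set inside a single class has no drift at all. So the first moment over such sets alone is at least $\bigl(cn/(Lk^{3/2})\bigr)^k$, by the enumeration of dense regular graphs \cite{MW90}. This diverges at $k\asymp\sqrt n$ unless $L\gtrsim n^{1/4}$; for $p=1/2$, \cite{KSW11} shows such a class really contains regular induced subgraphs of order $\Theta((n/L)^{2/3})\gg\sqrt n$. Types must be resolved at least at the window scale $n^{-1/4}$, and for the sharp constant effectively continuously. The $\sqrt{n/L}$ in the abstract has $L$ the Lipschitz constant of $W$ and concerns the lower bound on $F$, not a number of types.

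The analytic plan also targets the wrong difficulty. In Fourier inversion on $[-\pi,\pi]^k$ you cannot bound the integrand in absolute value. Doing so discards the phase $e^{-i\sum_v(d-\mu_v)\theta_v}$, which carries exactly the drift penalty. At the critical scale $s^2=\Theta(1)$ that penalty is itself $e^{-\Theta(k)}$. Capturing it with relative precision $e^{o(k)}$, uniformly over type configurations, needs a saddle-point shift (exponential tilting), plus a separate large-deviation treatment where the drift is not Gaussian. This is not standard; the paper treats even the lower-bound analogue as an open hypothesis (Hypothesis~\ref{hyp:llt}).

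The route recorded in the paper, following \cite{DM26}, needs no inhomogeneous local limit theorem:
\begin{enumerate}
\item condition on the specific $H\in\HH(k,d)$;
\item use the exact pairwise form of Lemma~\ref{lem:pairwise}, where the linear terms vanish because $H$ is regular;
\item take $|\HH(k,d)|$ from homogeneous enumeration;
\item integrate the resulting Gaussian over the types.
\end{enumerate}
The determinant needing a bound uniform in $H$ is then $\det Q_H$, which depends on the spectrum of $H$. That is where the trace argument of Proposition~\ref{prop:trace} enters, giving $e^{O(1)}$. In your formulation the degree covariance on the relevant sets is a small perturbation of $\lambda(1-\lambda)\bigl((k-2)I+J\bigr)$, whose determinant is explicit, so your trace bound is aimed at the wrong object. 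For a logistic kernel $W(x,y)=\sigma(\phi(x)+\phi(y))$, $\sigma(t)=e^t/(1+e^t)$, even the $H$-dependence disappears: $\PP(G[S]=H\mid a)=e^{d\sum_v\phi(a_v)}\prod_{u<v}\bigl(1+e^{\phi(a_u)+\phi(a_v)}\bigr)^{-1}$ for every $H\in\HH(k,d)$.

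Finally, the paper does not itself prove this theorem: it cites \cite{DM26}, whose version~3 uses a logistic variant. It also declines to call its own, more detailed, Section~\ref{sec:mech} computation a proof, because two steps are not done: localizing $\abar$ against the prefactor's $\Theta(K)$ curvature, and integrating over the outlier region $\max_i|y_i|>\delta\alpha$. Your sketch addresses neither.
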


The constructions behind all of these improvements are type-based; those of \cite{AKS08}
and the uniform model of \cite{DM26} fit the following framework after reparametrization,
as does the truncated logistic variant of \cite{DM26}.

\begin{definition}[Type-based model $\mathcal G(n,W)$]\label{def:model}
Let $W:[0,1]^2\to[p_0,1-p_0]$ be symmetric and Lipschitz, $0<p_0<1/2$, and let $L$ denote
the \emph{least} Lipschitz constant of $W$. Draw $x_1,\dots,x_n$ i.i.d.\ uniform on
$[0,1]$ and join $i\sim j$ independently with probability $W(x_i,x_j)$.
\end{definition}

Dyson--McKay use (in the notation of their Section~2) $W(x,y)=(x+y)/2$ with types confined
to $[\alpha,1-\alpha]$; their version~3 obtains the constant $\sqrt{2e}$ from a logistic
variant. Alon--Krivelevich--Sudakov use $W(x,y)=p(x)p(y)$ with
$p(x)=\frac14+\frac x2$.

This paper is about the direction that \cite{DM26} deliberately leaves open: how large a
regular induced subgraph does \emph{every} model of the family necessarily contain? The
central statement is a conjecture.

\begin{conjecture}[Optimality of $\sqrt{2e}$ within the family]\label{conj:main}
Let $W$ be as in Definition~\ref{def:model} with $L>0$ and let $G\sim\mathcal G(n,W)$.
Then asymptotically almost surely
\[
  F(G)\;\ge\;(\sqrt{2e}-o(1))\sqrt n .
\]
\end{conjecture}

If true, Conjecture~\ref{conj:main} means the family of Definition~\ref{def:model} cannot
prove any upper bound better than Theorem~\ref{thm:DM}: the constant $\sqrt{2e}$ is
optimal for it, and any further improvement on the upper bound for $f(n)$ --- in
particular any counterexample to the Erd\H{o}s--Fajtlowicz--Staton conjecture --- must
come from a construction outside Definition~\ref{def:model}. We emphasise that
Definition~\ref{def:model} is narrower than the informal phrase ``type-based'' might
suggest: it excludes discontinuous or step kernels, kernels attaining $0$ or $1$, and
$n$-dependent type spaces, and the barrier statement claims nothing about those.

What we prove is the statement at exponent level, conditionally on two hypotheses which we
state precisely in Section~\ref{sec:lower} and do not hide: a local limit lower bound for
the joint degree distribution of an inhomogeneous random graph
(Hypothesis~\ref{hyp:llt}), and a correlation estimate at sublinear overlaps
(Hypothesis~\ref{hyp:cor}).

\begin{theorem}[Conditional lower bound at exponent level]\label{thm:cond}
Fix $\epsilon\in(0,1/8)$ and let $W$ be as in Definition~\ref{def:model} with $L>0$.
Assume Hypotheses~\ref{hyp:llt} and~\ref{hyp:cor} at the scales $w=n^{-1/4}$,
$y=\lceil n^{1/2-\epsilon}\rceil$. Then a.a.s.\ $F(G)\ge n^{1/2-\epsilon}$.
\end{theorem}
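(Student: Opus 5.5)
The plan is a second-moment argument restricted to vertices whose types lie in a short window. Fix a point $x^*\in[0,1]$ and let $V_w$ be the set of vertices with $x_i\in[x^*-w/2,x^*+w/2]$, where $w=n^{-1/4}$. Then $|V_w|\sim wn=n^{3/4}$ a.a.s., and inside $V_w$ the kernel varies by at most $Lw$, so $G[V_w]$ is close to an Erd\H{o}s--R\'enyi graph with edge probability $p^*=W(x^*,x^*)$, up to a perturbation of order $Lw$.

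Let $y=\lceil n^{1/2-\epsilon}\rceil$ and fix a target degree $d\approx p^*(y-1)$. Let $X$ count the $y$-subsets $S\subseteq V_w$ such that $G[S]$ is $d$-regular. Conditionally on the types, I would show $\E X\to\infty$ and $\E X^2\le(1+o(1))(\E X)^2$. Chebyshev then gives $X>0$ a.a.s., hence $F(G)\ge y\ge n^{1/2-\epsilon}$.

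\textbf{First moment.} For a fixed $S$, the degree vector of $G[S]$ is that of an inhomogeneous random graph on $y$ vertices with edge probabilities $W(x_i,x_j)$. This is exactly the setting of Hypothesis~\ref{hyp:llt}, which I take to give
\[
\PP(G[S]\text{ is }d\text{-regular})\ \ge\ \exp(-O(y\log y)),
\]
possibly together with a Gaussian-type factor $\exp\bigl(-O(\sum_i(\mu_i-d)^2/y)\bigr)$, where $\mu_i=\sum_{j\in S}W(x_i,x_j)$ is the expected degree of $i$. Within $V_w$ we have $|\mu_i-d|\le Lwy$, so this penalty is at most $\exp(O(L^2w^2y^2))=\exp(O(n^{1/2-2\epsilon}))$. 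The count of candidate sets satisfies
\[
\binom{|V_w|}{y}\ \ge\ \exp\bigl(y\log(n^{3/4}/y)\bigr)=\exp\bigl((\tfrac14+\epsilon-o(1))\,y\log n\bigr).
\]
The requirement $\epsilon<1/8$ together with the choice $w=n^{-1/4}$ should make the entropy term beat both the local-limit loss $\exp(O(y\log y))$ with constant below $1/2$ and the curvature penalty. Concretely, $y\log y\approx(\tfrac12-\epsilon)y\log n$, so the comparison has to be done with the sharp constant $\tfrac12 y\log y$ in the regular-graph probability, which is $(\tfrac12-\epsilon)\cdot\tfrac12\,y\log n$. Verifying this bookkeeping is the step that fixes the window scale, so I would start by computing it carefully.

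\textbf{Second moment.} Split $\E X^2=\sum_{S,T}\PP(\text{both regular})$ according to the overlap $k=|S\cap T|$.
\begin{itemize}
\item For $k$ linear in $y$, up to full overlap $k=y$: bound the joint probability by $\PP(S\text{ regular})$ times a conditional probability for $T\setminus S$. The number of such pairs is reduced by a factor of order $\binom{y}{k}\binom{|V_w|}{y-k}/\binom{|V_w|}{y}\le(y^2/|V_w|)^{k}\cdot e^{O(k)}$. With $y^2/|V_w|=n^{1-2\epsilon-3/4}=n^{1/4-2\epsilon}$, this must be beaten by the conditional regularity cost, which again uses only Hypothesis~\ref{hyp:llt}. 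This is the complementary range mentioned in the abstract that needs no correlation hypothesis.
\item For sublinear overlaps $1\le k\le\delta y$: this is precisely where Hypothesis~\ref{hyp:cor} is invoked. I take it to say that the joint probability is at most $(1+o(1))\PP(S\text{ reg})\PP(T\text{ reg})\exp(O(k^2/y))$. Summing against the hypergeometric overlap distribution, which has mean $y^2/|V_w|=n^{1/4-2\epsilon}\ll y$, then gives the factor $1+o(1)$.
\end{itemize}

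The main obstacle is matching the linear-overlap range to the entropy available at window width $w$. The conditional regularity probability there must be shown to decay like $\exp(-c\,k\log y)$ with $c$ large enough to beat $(y^2/|V_w|)^k$. I would derive this by applying Hypothesis~\ref{hyp:llt} to the bipartite and internal degrees of $T\setminus S$ given $G[S]$, and I expect the constraint $\epsilon<1/8$ to enter at exactly that point.
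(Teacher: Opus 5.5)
Your overall plan matches the paper's: a type window of width $n^{-1/4}$, a first-moment margin of $\frac32\epsilon\,y\log n$, and a split by overlap with Hypothesis~\ref{hyp:cor} on small overlaps. The large-overlap range, however, has a genuine gap.

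\textbf{The large-overlap range.} What this range needs is an \emph{upper} bound on $\PP(A_T\mid E_{S\cap T})$, where $A_T$ is the event that $G[T]$ is regular. Hypothesis~\ref{hyp:llt} is a \emph{lower} bound, so applying it to the degrees of $T\setminus S$ cannot supply this. The missing idea is anticoncentration:
\begin{itemize}
\item Condition on all edges inside $T$ except those between $S\cap T$ and $T\setminus S$.
\item If $G[T]$ is $D$-regular, each $v\in T\setminus S$ must have a prescribed number of neighbours in $S\cap T$.
\item These numbers are independent sums of $k$ Bernoulli variables with parameters in $[p_0,1-p_0]$.
\item A Fourier bound plus a union over $D$ then gives $\PP(A_T\mid E_{S\cap T})\le y\,(C_1/\sqrt{k})^{y-k}$, uniformly in the conditioning (Lemmas~\ref{lem:anti} and~\ref{lem:bigoverlap}).
\end{itemize}
Hypothesis~\ref{hyp:llt} enters only through the denominator, $\PP(A_T\mid x)\ge(c/\sqrt{y})^{y-1}e^{-o(y)}$.

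\textbf{The bookkeeping is inverted.} The ratio $\rho(S,T)=\PP(A_S\cap A_T)/(\PP(A_S)\PP(A_T))$ does not decay at large overlap: at $k=y$ it equals $1/\PP(A_T)\ge e^{cy\log y}$. So no ``conditional regularity cost'' can beat a growing factor $(y^2/|V_w|)^k$. The decay has to come from the scarcity of such pairs, and your count discards it. The fraction $h(k)$ of pairs at overlap $k$ is at most $(e\tilde\mu/k)^k$, with $\tilde\mu=y^2/(M-2y)$ and $M=|V_w|$ (Lemma~\ref{lem:hyp}). You dropped the factor $k^{-k}$ coming from $\binom{y}{k}\le(ey/k)^k$. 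For $k\ge y/\sqrt{\log n}$, that factor turns $n^{(1/4-2\epsilon)k}$ into $n^{-(1/4+\epsilon)k(1-o(1))}$.

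The two bounds above give $\log\rho\le\frac{k}{2}\log y+\frac{y-k}{2}\log\frac{y}{k}+O(y)$. The net exponent is therefore $-\frac32\epsilon k\log n\,(1-o(1))+\frac{y-k}{2}\log\frac{y}{k}+O(y)$. This is negative for all $k\ge y/\sqrt{\log n}$ and every $\epsilon>0$: the margin is just the first-moment margin, so this is not where $\epsilon<1/8$ enters. It also fixes where the range must begin, because Hypothesis~\ref{hyp:cor} is only available for $k\le y/\sqrt{\log n}$, not up to $\delta y$.

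\textbf{The sublinear range has a second, independent gap.} With your unweighted count, ``summing against the hypergeometric overlap distribution'' tacitly replaces $\sum_{|S\cap T|=k}\PP(A_S)\PP(A_T)$ by $h(k)(\E X)^2$. That treats $\PP(A_S\mid x)$ as essentially constant in $S$. Since Hypothesis~\ref{hyp:llt} is one-sided, nothing controls how much $\PP(A_S\mid x)$ varies across $S$; the drift factor in the lower bound alone ranges over a factor $e^{\Theta(n^{1/2-2\epsilon})}$. Meanwhile, for $\tilde\mu\log n<k\le y/\sqrt{\log n}$, $h(k)$ decays only like $e^{-k\log\log n}$, with $k$ as small as $n^{1/4-2\epsilon}\log n$, which cannot absorb such a variation. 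For $k\le\tilde\mu\log n$ there is no problem: the correlation factor is $1+o(1)$ uniformly, so you may bound by the full product sum $(\E X)^2$.

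This is exactly the weighting gap of v1. The paper closes it with $\widetilde X=\sum_S\mathbf 1_{A_S}/\PP(A_S\mid x)$. Then $\E[\widetilde X\mid x]=\binom{M}{y}$ exactly and $\E[\widetilde X^2\mid x]\le\binom{M}{y}^2\sum_k h(k)\max_{|S\cap S'|=k}\rho(S,S')$, so only pairwise bounds on $\rho$ are ever needed.

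\textbf{Fixed target degree.} A single degree $d$ for all $S$ is not covered by Hypothesis~\ref{hyp:llt}. It lower-bounds only the probability of being $t$-regular for $t=t(S)$, the least integer $\ge\bar\mu_S$ with $ty$ even. As $S$ ranges over the window, $\bar\mu_S$ moves over an interval of length of order $yLw=n^{1/4-\epsilon}\to\infty$. So $A_S$ should be the event that $G[S]$ is regular of any degree.
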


The point of Theorem~\ref{thm:cond} is the exact division of labour it certifies: given a
local limit input of the same nature as the enumeration results that power the
upper-bound side, the entire second-moment argument reduces to a correlation estimate at
\emph{sublinear} overlaps: the complementary range --- including full overlap, where no
uniform correlation estimate can hold --- needs no correlation input at all
(Lemma~\ref{lem:bigoverlap} is unconditional; only the local limit hypothesis enters that
range, through the normalisation of the weights).

\subsection*{What this paper contains}
\begin{enumerate}
\item An exact pairwise form of the conditional probability of a regular graph in the
model (Section~\ref{sec:first}), with a corrected pointwise inequality
(Proposition~\ref{prop:pointwise}) replacing a false one from v1.
\item The exact-curvature (``determinant'') first-moment mechanism behind the constant
$\sqrt{2e}$ (Section~\ref{sec:mech}), including: positivity of the quadratic form for all
densities (Lemma~\ref{lem:pos}); a complement symmetry reducing to $\lambda\le1/2$
(Lemma~\ref{lem:compl}); the spectral floor, valid precisely for $\abar\le1/2$
(Lemma~\ref{lem:floor}); and a \emph{uniform} bound on the determinant correction by a
trace argument (Proposition~\ref{prop:trace}), replacing a false spectral estimate from
v1. We state precisely which two analytic steps are still missing for this mechanism to
constitute a second proof of Theorem~\ref{thm:DM}, and we do not claim them.
\item The lower-bound program (Section~\ref{sec:lower}): the two hypotheses, the
conditional first moment (Proposition~\ref{prop:first}), the unconditional large-overlap
estimate via anticoncentration (Lemma~\ref{lem:bigoverlap}), and the proof of
Theorem~\ref{thm:cond} by a \emph{weighted} second moment, which also removes the
weighting gap of the v1 argument.
\item Certified exact computations at orders up to $48$ consistent with the predicted
crossover $F\asymp\min(n^{2/3},\sqrt{n/L})$ (Section~\ref{sec:num}), presented as
illustration, not validation.
\end{enumerate}

\subsection*{Note on version 1}
The first arXiv version of this manuscript claimed as theorems both the upper bound now
cited as Theorem~\ref{thm:DM} and the assertion of Conjecture~\ref{conj:main}. A detailed
critique by P.~W.~Dyson and B.~D.~McKay identified errors and gaps in both proofs, among
them: a false pointwise remainder-free inequality (see Remark~\ref{rem:c5}); a Gaussian
lemma applied to a matrix depending on the variable of integration; a false spectral floor
and determinant estimate in the dense range (see Remark~\ref{rem:specfalse}); a missing
summation over orders (see Remark~\ref{rem:orders}); the unproved local limit input
(Hypothesis~\ref{hyp:llt}); and an invalid bounded-differences argument together with a
false overlap estimate in the second moment (see Remark~\ref{rem:corstatus}). Their
version~3 of \cite{DM26} has meanwhile proved the upper bound with the constant
$\sqrt{2e}$ by a different (logistic) route. The present version withdraws the claims of
v1, keeps what is proved --- so labelled --- states the missing ingredients as explicit
hypotheses, and reorients the paper toward the lower-bound problem. I thank Dyson and
McKay for their critique, which this version follows closely.

\subsection{Related work}
The enumeration of graphs by degree sequence \cite{MW90,MW91,LW24} supplies the counting
input used by every argument in this area. On the random side, Krivelevich, Sudakov and
Wormald \cite{KSW11} showed $F(G(n,1/2))=\Theta(n^{2/3})$; the constant graphon is the
degenerate boundary case of the crossover discussed in Section~\ref{sec:num}. Alon,
Krivelevich and Sudakov \cite{AKS08} studied the relaxation to \emph{nearly} regular
induced subgraphs. Anticoncentration in Ramsey graphs has recently seen decisive progress
\cite{KSSS23}; we discuss in Section~\ref{sec:concl} why that technology bears on the
deterministic problem.

\subsection*{Notation}
For a $k$-set $S$ with types $a=(a_1,\dots,a_k)$ write $\abar=k^{-1}\sum_i a_i$,
$y_i=a_i-\abar$, $s^2=\sum_i y_i^2$, $w_{ij}=(y_i+y_j)/2$, $K=\binom k2$. For a
$d$-regular graph $H$ on $S$, $\lambda=d/(k-1)$ and $m=kd/2$; the symbol $m$ is used
\emph{only} in this sense and \emph{only} in Sections~\ref{sec:first}--\ref{sec:mech}
(v1 overloaded it). $\HH(k,d)$ is the set of $d$-regular graphs on $k$ labelled vertices.
The ambient order is always $n$. Logarithms are natural unless subscripted.

\section{The first-moment identity}\label{sec:first}

Throughout this section and the next, the types $a_1,\dots,a_k$ of a fixed $k$-set $S$
lie in $[\alpha,1-\alpha]$ for a fixed $\alpha\in(0,1/2)$, as in the construction of
\cite{DM26}: types i.i.d.\ uniform on $[\alpha,1-\alpha]$ and $W(x,y)=(x+y)/2$. For
$H\in\HH(k,d)$,
\begin{equation}\label{eq:basic}
  \PP(G[S]=H\mid a)=\abar^{\,m}(1-\abar)^{K-m}
  \exp\Bigl(\sum_{ij\in E(H)}\log\bigl(1+\tfrac{w_{ij}}{\abar}\bigr)
           +\sum_{ij\notin E(H)}\log\bigl(1-\tfrac{w_{ij}}{1-\abar}\bigr)\Bigr).
\end{equation}

\begin{lemma}[Pairwise form]\label{lem:pairwise}
Let $g(x):=\log(1+x)-x\le0$ on $(-1,\infty)$. For $H\in\HH(k,d)$ and any types in
$[\alpha,1-\alpha]^k$,
\[
  \PP(G[S]=H\mid a)=\abar^{\,m}(1-\abar)^{K-m}
  \exp\Bigl(\sum_{ij\in E(H)}g\bigl(\tfrac{w_{ij}}{\abar}\bigr)
           +\sum_{ij\notin E(H)}g\bigl(-\tfrac{w_{ij}}{1-\abar}\bigr)\Bigr),
\]
and every summand in the exponent is nonpositive.
\end{lemma}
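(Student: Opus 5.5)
The plan is to start from \eqref{eq:basic} and subtract the linear parts of the logarithms; the point is that these linear parts cancel exactly. Write each summand as $\log(1+x)=x+g(x)$. The exponent in \eqref{eq:basic} then equals the exponent in the lemma plus the linear remainder
\[
  R:=\frac1{\abar}\sum_{ij\in E(H)}w_{ij}-\frac1{1-\abar}\sum_{ij\notin E(H)}w_{ij}.
\]
So it suffices to show $R=0$.

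First, because $H$ is $d$-regular, every vertex $i$ lies in exactly $d$ edges of $H$. Hence
\[
  \sum_{ij\in E(H)}w_{ij}=\tfrac12\sum_{ij\in E(H)}(y_i+y_j)=\tfrac d2\sum_i y_i=0,
\]
since $\sum_i y_i=0$ by the definition of $\abar$. The same holds over all $K$ pairs, where each vertex has $k-1$ partners. Taking the difference, the sum of $w_{ij}$ over non-edges also vanishes, each vertex having $k-1-d$ non-edges. Thus $R=0$ and the identity follows.

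We should also check the bookkeeping in \eqref{eq:basic} itself. For an edge,
\[
  W(a_i,a_j)=\tfrac{a_i+a_j}2=\abar+w_{ij}=\abar\bigl(1+\tfrac{w_{ij}}{\abar}\bigr),
\]
and for a non-edge, $1-W=(1-\abar)\bigl(1-\tfrac{w_{ij}}{1-\abar}\bigr)$. There are $m=kd/2$ edges and $K-m$ non-edges, so the prefactors are as stated.

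For the domain, types in $[\alpha,1-\alpha]$ give $\abar+w_{ij}\in[\alpha,1-\alpha]$. Therefore $w_{ij}/\abar>-1$ and $-w_{ij}/(1-\abar)>-1$, so all arguments lie in $(-1,\infty)$ and the logarithms are defined.

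Nonpositivity of each summand is the elementary inequality $\log(1+x)\le x$ on $(-1,\infty)$. This follows from concavity of $\log(1+x)$ together with tangency at $x=0$: $g(0)=0$, and $g'(x)=-x/(1+x)$ is positive for $x<0$ and negative for $x>0$.

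There is no real obstacle here. The only step needing care is the cancellation of the linear terms, which uses regularity of $H$ and fails for non-regular $H$.
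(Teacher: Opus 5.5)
Your proof is correct and follows essentially the same route as the paper: subtract the linear parts, note that $d$-regularity gives $\sum_{ij\in E(H)}w_{ij}=\frac d2\sum_iy_i=0$ while $\sum_{i<j}w_{ij}=\frac{k-1}{2}\sum_iy_i=0$, so the non-edge sum also vanishes, and get nonpositivity from $\log(1+x)\le x$. Your extra checks — the bookkeeping behind \eqref{eq:basic} and that every argument of $g$ lies in $(-1,\infty)$ — are sound; the paper leaves them implicit.
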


\begin{proof}
Since $H$ is $d$-regular, $\sum_{ij\in E(H)}w_{ij}=\frac d2\sum_iy_i=0$; and
$\sum_{i<j}w_{ij}=\frac{k-1}2\sum_iy_i=0$, so the sum over non-edges vanishes as well.
Thus subtracting the linear parts term by term changes nothing in \eqref{eq:basic}.
Nonpositivity is $\log(1+x)\le x$.
\end{proof}

The linear terms vanish \emph{separately} on edges and on non-edges; this is what allows
discarding pairs one at a time in Proposition~\ref{prop:pointwise} below.

\begin{lemma}[Local quadratic gain]\label{lem:quadgain}
If $|x|\le\delta\le1/4$ then $g(x)\le-(1-\delta)\,x^2/2$.
\end{lemma}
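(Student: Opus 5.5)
We prove Lemma~\ref{lem:quadgain} from the Taylor expansion of $g$ at $0$, bounding the cubic and higher terms by a geometric series. For $|x|<1$,
\[
  g(x)=\log(1+x)-x=-\frac{x^2}{2}+\frac{x^3}{3}-\frac{x^4}{4}+\cdots
  =-\frac{x^2}{2}+\sum_{j\ge3}\frac{(-1)^{j+1}x^j}{j}.
\]
It suffices to show that the tail is at most $\delta x^2/2$ in absolute value. Then $g(x)\le -x^2/2+\delta x^2/2=-(1-\delta)x^2/2$.

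For the tail, bound each term by $|x|^j/j\le |x|^j/3$ when $j\ge3$. Summing the geometric series gives
\[
  \Bigl|\sum_{j\ge3}\frac{(-1)^{j+1}x^j}{j}\Bigr|\le\frac{|x|^3}{3(1-|x|)}\le \frac{x^2\,\delta}{3(1-\delta)}.
\]
Since $\delta\le1/4$, we have $3(1-\delta)\ge 9/4\ge 2$, so the tail is at most $\delta x^2/2$, as required.

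Alternatively, one can avoid series with Taylor's theorem in Lagrange form. The third derivative is $g'''(t)=2/(1+t)^3$, so
\[
  g(x)=-\frac{x^2}{2}+\frac{x^3}{3(1+\xi)^3}
\]
for some $\xi$ between $0$ and $x$. If $x\le0$, the cubic term is nonpositive and the claim is immediate. If $0<x\le\delta$, the cubic term is at most $x^3/3\le \delta x^2/3\le\delta x^2/2$. This second route is cleaner and handles both signs at once, so I would present it. There is no real obstacle. The only point needing care is the sign of the remainder: for $x<0$ it is favourable, and for $x>0$ it is controlled by $(1+\xi)^{-3}\le1$.
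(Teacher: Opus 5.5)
Your proposal is correct. Your first argument is exactly the paper's proof: the paper also bounds the alternating tail $\sum_{j\ge3}(-1)^{j-1}x^j/j$ by $|x|^3/(3(1-|x|))\le\frac49\delta x^2\le\frac\delta2 x^2$.

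The Lagrange-remainder route you say you would present is a genuinely different, if small, variation. Instead of taking absolute values, it uses the sign of the explicit remainder $x^3/(3(1+\xi)^3)$, and this gives a slightly stronger result. For $-1<x\le0$ you get $g(x)\le-x^2/2$ with no loss at all; equivalently, $\log(1+x)-x+x^2/2$ has derivative $x^2/(1+x)\ge0$. For $0<x\le\delta$ you get $g(x)\le-(1-\tfrac23\delta)x^2/2$. So the hypothesis $\delta\le1/4$ is unnecessary, and any $\delta<1$ works. The sign-blind series estimate, by contrast, closes only for $\delta\le1/3$, since it needs $3(1-\delta)\ge2$.

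What the series route buys is uniformity: a single estimate covers both signs. Contrary to your remark, the Lagrange route does not handle both signs at once; it splits into cases by the sign of $x$. For the paper's only use of the lemma, in Proposition~\ref{prop:pointwise} with $\delta\le1/4$, either version suffices.
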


\begin{proof}
$g(x)+x^2/2=\sum_{j\ge3}(-1)^{j-1}x^j/j$, whose absolute value is at most
$\frac{|x|^3}{3(1-|x|)}\le\frac{4}{9}\delta x^2\le\frac{\delta}{2}x^2$.
\end{proof}

\begin{proposition}[Corrected pointwise bound]\label{prop:pointwise}
Fix $\delta\in(0,1/4]$ and call a pair $ij$ \emph{$\delta$-good} if
$|w_{ij}|\le\delta\alpha$. Then for any types in $[\alpha,1-\alpha]^k$ and any
$H\in\HH(k,d)$,
\[
  \PP(G[S]=H\mid a)\le\abar^{\,m}(1-\abar)^{K-m}
  \exp\Bigl(-(1-\delta)\Bigl[\sum_{\substack{ij\in E(H)\\ \delta\text{-good}}}
  \frac{w_{ij}^2}{2\abar^2}
  +\sum_{\substack{ij\notin E(H)\\ \delta\text{-good}}}\frac{w_{ij}^2}{2(1-\abar)^2}
  \Bigr]\Bigr).
\]
In particular, if $\max_i|y_i|\le\delta\alpha$ then every pair is $\delta$-good and the
bracket equals $y^{\mathsf T}Q_Hy$ with $Q_H$ as in \eqref{eq:Q} below.
\end{proposition}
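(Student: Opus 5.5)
We start from the exact identity of Lemma~\ref{lem:pairwise}. It writes $\PP(G[S]=H\mid a)$ as $\abar^{\,m}(1-\abar)^{K-m}$ times the exponential of a sum of terms $g(\cdot)$, one per pair. Every one of these terms is nonpositive. The monotonicity of $\exp$ therefore lets us discard any subcollection of summands and still obtain an upper bound. We discard exactly the summands coming from pairs that are not $\delta$-good, separately for edges and non-edges of $H$. This is legitimate only because the linear parts were already removed pair by pair in Lemma~\ref{lem:pairwise}: the vanishing of $\sum w_{ij}$ over edges and over non-edges is used there, and not here. Dropping a raw summand $\log(1+w_{ij}/\abar)$ from \eqref{eq:basic} would not be justified, and this is exactly the point of Remark~\ref{rem:c5}.

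For each surviving good pair we apply Lemma~\ref{lem:quadgain}. Since $\abar\in[\alpha,1-\alpha]$, we have $\abar\ge\alpha$ and $1-\abar\ge\alpha$. Hence a good edge has $|x|=|w_{ij}|/\abar\le\delta\alpha/\alpha=\delta$, and a good non-edge has $|x|=|w_{ij}|/(1-\abar)\le\delta$. Because $\delta\le1/4$, the lemma gives
\[
g\bigl(\tfrac{w_{ij}}{\abar}\bigr)\le-(1-\delta)\frac{w_{ij}^2}{2\abar^2}
\]
for good edges, and
\[
g\bigl(-\tfrac{w_{ij}}{1-\abar}\bigr)\le-(1-\delta)\frac{w_{ij}^2}{2(1-\abar)^2}
\]
for good non-edges. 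Summing these inequalities and factoring out $-(1-\delta)$ gives the bracket in the statement, and exponentiating yields the inequality.

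For the ``in particular'' clause, suppose $\max_i|y_i|\le\delta\alpha$. Then $|w_{ij}|\le(|y_i|+|y_j|)/2\le\delta\alpha$, so every pair is good. The bracket then becomes the full quadratic form
\[
\sum_{ij\in E(H)}\frac{(y_i+y_j)^2}{8\abar^2}+\sum_{ij\notin E(H)}\frac{(y_i+y_j)^2}{8(1-\abar)^2},
\]
and this is $y^{\mathsf T}Q_Hy$ by the definition \eqref{eq:Q}, which is simply this form written in matrix shape.

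There is no real obstacle here. The only point needing care is the order of operations already described: positivity of each pairwise term must come from the pairwise form with the linear parts removed, before any pairs are discarded.
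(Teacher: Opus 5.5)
Your proof is correct and follows the paper's argument exactly: bound each bad-pair summand of the pairwise form of Lemma~\ref{lem:pairwise} by $0$, apply Lemma~\ref{lem:quadgain} at good pairs using $\abar\ge\alpha$ and $1-\abar\ge\alpha$, and observe $|w_{ij}|\le\max_i|y_i|$ for the final clause. One small misattribution: Remark~\ref{rem:c5} concerns the failure of the quadratic bound without the goodness restriction and the $(1-\delta)$ deflation (the positive cubic term of $g$), not the dropping of raw summands of \eqref{eq:basic}; the justification for discarding pairs one at a time is the observation following Lemma~\ref{lem:pairwise}.
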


\begin{proof}
In Lemma~\ref{lem:pairwise}, bound each summand at a bad pair by $0$, and each summand at
a good pair by Lemma~\ref{lem:quadgain}, using $|w_{ij}/\abar|\le\delta\alpha/\alpha=
\delta$ and likewise $|w_{ij}/(1-\abar)|\le\delta$ (both $\abar$ and $1-\abar$ are at
least $\alpha$). If all $|y_i|\le\delta\alpha$ then $|w_{ij}|\le\max_i|y_i|\le\delta
\alpha$ for every pair.
\end{proof}

\begin{remark}[The global inequality is false]\label{rem:c5}
Version~1 asserted the bound of Proposition~\ref{prop:pointwise} \emph{without} the
goodness restriction and with no deflation factor. That is false: for $H=C_5$ with edge
set $\{14,15,23,25,34\}$, $\abar=1/2$ and $y=s(-1,-1,1,1,0)$, the exact exponent is
$4\log(1-s)+2\log(1+2s)=-6s^2+4s^3+O(s^4)$, which exceeds the quadratic $-6s^2$ for all
small $s>0$. This counterexample is due to Dyson and McKay.
\end{remark}

\section{The determinant mechanism for the upper bound}\label{sec:mech}

This section records the exact-curvature first-moment computation that independently
identifies the constant $\sqrt{2e}$ of Theorem~\ref{thm:DM}, together with the parts of
it we can prove. \emph{We do not claim a second proof of Theorem~\ref{thm:DM} here}:
Remark~\ref{rem:missing} states exactly which two analytic steps are missing. The reason
for recording the mechanism is that it is the computation which, read in the opposite
direction, motivates Conjecture~\ref{conj:main} and fixes the scales used in
Section~\ref{sec:lower}.

The quadratic form. Collecting the quadratic weights of
Proposition~\ref{prop:pointwise} over all pairs and using $\sum_{ij\in E(H)}w_{ij}^2=\frac14(ds^2+y^{\mathsf T}A_Hy)$ and
$\sum_{i<j}w_{ij}^2=\frac14(k-2)s^2$ (the second identity uses $\sum_iy_i=0$),
\begin{equation}\label{eq:Q}
  y^{\mathsf T}Q_Hy:=\sum_{ij\in E(H)}\frac{w_{ij}^2}{2\abar^2}
  +\sum_{ij\notin E(H)}\frac{w_{ij}^2}{2(1-\abar)^2},
  \qquad
  Q_H=\tfrac18\Bigl[\bigl(\tfrac{d}{\abar^2}+\tfrac{k-2-d}{(1-\abar)^2}\bigr)I
  +\bigl(\tfrac1{\abar^2}-\tfrac1{(1-\abar)^2}\bigr)A_H\Bigr]
\end{equation}
on $\{y:\sum_iy_i=0\}$.

\begin{lemma}[Positivity, all densities]\label{lem:pos}
For every $\abar\in(0,1)$, every $k\ge3$ and every $H\in\HH(k,d)$, $Q_H\succ0$ on
$\{\sum_iy_i=0\}$.
\end{lemma}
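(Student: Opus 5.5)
Since $y^{\mathsf T}Q_Hy$ is by definition (the left-hand side of \eqref{eq:Q}) a sum of nonnegative terms, I will work with that sum rather than with the matrix form. It equals $\sum_{i<j} c_{ij} w_{ij}^2$, where $c_{ij}=1/(2\abar^2)$ on edges and $c_{ij}=1/(2(1-\abar)^2)$ on non-edges. Every weight satisfies $c_{ij}\ge c_{\min}:=\min\{1/(2\abar^2),1/(2(1-\abar)^2)\}>0$ for every $\abar\in(0,1)$. Hence
\[
  y^{\mathsf T}Q_Hy\;\ge\;c_{\min}\sum_{i<j}w_{ij}^2\;=\;\frac{c_{\min}}{4}(k-2)\,s^2 ,
\]
where the last equality is the identity $\sum_{i<j}w_{ij}^2=\frac14(k-2)s^2$ on $\{\sum_iy_i=0\}$ already recorded before \eqref{eq:Q}. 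For $k\ge3$ the factor $k-2$ is positive, so the right side is strictly positive whenever $y\ne0$. This is exactly positive definiteness on the hyperplane.

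To be careful, I will re-derive the identity. We have
\[
  \sum_{i<j}(y_i+y_j)^2=\tfrac12\Bigl[\sum_{i,j}(y_i+y_j)^2-\sum_i 4y_i^2\Bigr]
  =\tfrac12\bigl[2ks^2+2(\textstyle\sum y_i)^2-4s^2\bigr]=(k-2)s^2 .
\]
Dividing by $4$ gives the stated identity. I will also check that the matrix in \eqref{eq:Q} represents the weighted sum on the hyperplane. This uses $\sum_{ij\in E(H)}w_{ij}^2=\frac14(ds^2+y^{\mathsf T}A_Hy)$, the analogous non-edge identity obtained by subtraction, and the fact that the sum over all pairs contributes $\frac14(k-2)s^2$. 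This check is routine and is already stated in the text, so the lemma follows from it.

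An alternative spectral check gives the same conclusion. Write $Q_H=\frac18[\beta I+\gamma A_H]$, with $\beta=d/\abar^2+(k-2-d)/(1-\abar)^2$ and $\gamma=1/\abar^2-1/(1-\abar)^2$. On the hyperplane, the relevant eigenvalues of $A_H$ are those other than the Perron eigenvalue $d$ (eigenvector $\mathbf 1$), and they lie in $[-d,d]$. Only the extreme sign case needs attention.

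If $\gamma>0$, the worst case is eigenvalue $-d$, and $\beta-\gamma d=(k-2-d)/(1-\abar)^2+d/(1-\abar)^2=(k-2)/(1-\abar)^2>0$.

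If $\gamma<0$, I would use the complement instead: the eigenvalues of $A_H$ on $\mathbf 1^\perp$ are at least $-(k-1-d)-1$... The direct bound $\beta+\gamma d=(k-2)/\abar^2$ needs eigenvalue $d$, but $-1-\mu$ for the complement shows that the eigenvalues on $\mathbf 1^\perp$ are at most $k-2-d$... This route is messier, which is why I prefer the weighted-sum argument above.

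The only potential obstacle is the edge case $k=2$, which the hypothesis $k\ge3$ excludes. A lesser point is that for degenerate $d$ (e.g.\ $d=0$ or $d=k-1$) one weight class is empty. The uniform lower bound $c_{\min}$ handles these cases without separate treatment.
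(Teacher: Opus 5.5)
Your proof is correct and is essentially the paper's: both read $y^{\mathsf T}Q_Hy$ off \eqref{eq:Q} as a positively weighted sum of the $w_{ij}^2$. The paper finishes qualitatively (if every $w_{ij}=0$ then $y=0$ when $k\ge3$), whereas you finish quantitatively via $\sum_{i<j}w_{ij}^2=\frac14(k-2)s^2$, which gives the floor $\frac{k-2}{8\max(\abar,1-\abar)^2}$; this is exactly Lemma~\ref{lem:floor} for $\abar\le1/2$, and for $\abar>1/2$ it is attained at the eigenvalue $\mu=r-1=k-2-d$ of the family in Remark~\ref{rem:specfalse}. One slip in the spectral aside you abandoned: $\beta+\gamma d$ is not $(k-2)/\abar^2$. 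The correct step uses $\mu_i\le k-2-d$ on $\mathbf 1^\perp$, which follows from the complement, together with $\beta+\gamma(k-2-d)=(k-2)/\abar^2$, so that route closes as well.
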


\begin{proof}
By \eqref{eq:Q}, $y^{\mathsf T}Q_Hy$ is a sum of positive weights times $w_{ij}^2$ over
all pairs, so it vanishes only if every $w_{ij}=0$, i.e.\ $y_i=-y_j$ for all $i<j$, which
for $k\ge3$ forces $y=0$. (This argument is due to Dyson and McKay.)
\end{proof}

\begin{lemma}[Complement symmetry]\label{lem:compl}
In the construction of \cite{DM26} (types uniform on $[\alpha,1-\alpha]$,
$W(x,y)=(x+y)/2$), the map $x\mapsto1-x$ on types composed with graph complementation
preserves the law of $G$. Consequently, writing $X_{k,d}$ for the number of $d$-regular
induced subgraphs of order $k$, $\;\E X_{k,d}=\E X_{k,k-1-d}$, and for any bound on
$\sum_d\E X_{k,d}$ it suffices to treat $d\le(k-1)/2$, i.e.\ $\lambda\le1/2$. More
generally, for $W$ as in Definition~\ref{def:model} the kernel
$\widetilde W(x,y):=1-W(1-x,1-y)$ lies in the same family (same $p_0$ and $L$) and the
complement of $\mathcal G(n,W)$ is distributed as $\mathcal G(n,\widetilde W)$; since a
set induces a regular subgraph of $G$ iff it does so in the complement, family-wide
statements may always be restricted to $\lambda\le1/2$.
\end{lemma}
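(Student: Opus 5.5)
The plan is to verify the claims in order: first the general kernel statement, then specialise to the Dyson--McKay construction, and finally deduce the consequences for $X_{k,d}$ and the reduction to $\lambda\le1/2$.

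\emph{General kernel.} Let $G\sim\mathcal G(n,W)$ and let $\bar G$ be its complement. Conditionally on the types $x_1,\dots,x_n$, the non-edges of $G$ are independent, and $ij$ is a non-edge with probability $1-W(x_i,x_j)$. Put $x_i'=1-x_i$. These are again i.i.d.\ uniform on $[0,1]$, since $x\mapsto 1-x$ preserves Lebesgue measure on $[0,1]$. Conditionally on the $x'$, the edges of $\bar G$ are independent with probabilities
\[
1-W(1-x_i',1-x_j')=\widetilde W(x_i',x_j').
\]
Hence $\bar G\sim\mathcal G(n,\widetilde W)$. We then check that $\widetilde W$ lies in the same family. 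Symmetry is inherited from $W$. Its range is $1-[p_0,1-p_0]=[p_0,1-p_0]$. The map $(x,y)\mapsto(1-x,1-y)$ is an isometry for any of the usual norms on $[0,1]^2$, so $\widetilde W$ is Lipschitz with the same constants as $W$. The involution $W\mapsto\widetilde W$ therefore preserves the least Lipschitz constant $L$.

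\emph{The construction of \cite{DM26}.} Here the types are uniform on $[\alpha,1-\alpha]$, an interval invariant under $x\mapsto1-x$, which preserves the uniform law on it. Moreover
\[
1-\tfrac{(1-x)+(1-y)}{2}=\tfrac{x+y}{2},
\]
so $\widetilde W=W$. The previous paragraph, applied with the type law uniform on $[\alpha,1-\alpha]$ instead of $[0,1]$ (the argument used only invariance of the type law under $x\mapsto1-x$), gives that $\bar G$ has the same law as $G$. Equivalently, the joint map $(x,G)\mapsto(1-x,\bar G)$ preserves the law of the pair. This is the slightly stronger form, useful when types are retained.

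\emph{Consequences.} For a $k$-set $S$, $G[S]$ is $d$-regular if and only if $\bar G[S]=\overline{G[S]}$ is $(k-1-d)$-regular. Thus $X_{k,d}(G)=X_{k,k-1-d}(\bar G)$ pointwise. Taking expectations and using $\bar G\overset{d}{=}G$ gives $\E X_{k,d}=\E X_{k,k-1-d}$. Therefore
\[
\sum_d\E X_{k,d}\le 2\sum_{d\le(k-1)/2}\E X_{k,d},
\]
so any bound proved for $\lambda=d/(k-1)\le1/2$ transfers to all $d$ at the cost of a factor $2$. For the family-wide claim, a set induces a regular subgraph of $G$ if and only if it does so in $\bar G$. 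Hence $F(G)=F(\bar G)$, and more precisely the regular subgraphs of $G$ with $\lambda\ge1/2$ correspond to regular subgraphs of $\bar G\sim\mathcal G(n,\widetilde W)$ with $\lambda\le1/2$. Since $\widetilde W$ satisfies the same hypotheses with the same $p_0$ and $L$, any statement about the whole family that is proved for $\lambda\le1/2$ also covers $\lambda\ge1/2$ by applying it to $\widetilde W$.

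There is no real obstacle in this argument. The only point requiring care is that "same $L$" uses the fact that $(x,y)\mapsto(1-x,1-y)$ is an isometry of $[0,1]^2$ and that $W\mapsto\widetilde W$ is an involution, which makes the least Lipschitz constant exactly equal rather than merely bounded above.
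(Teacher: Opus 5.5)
Your proof is correct and takes the same route as the paper: reflect the types by $x\mapsto1-x$, note that the complement then carries edge probabilities $\widetilde W$ (which equals $W$ for $(x+y)/2$ on the symmetric interval $[\alpha,1-\alpha]$), and use that $G[S]$ is $d$-regular iff $\overline{G[S]}$ is $(k-1-d)$-regular. You also make explicit what the paper's two-line proof leaves implicit: that $\widetilde W$ has the same range $[p_0,1-p_0]$ and, via the involution $\widetilde{\widetilde W}=W$, exactly the same least Lipschitz constant, and the factor-$2$ bound $\sum_d\E X_{k,d}\le2\sum_{d\le(k-1)/2}\E X_{k,d}$.
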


\begin{proof}
Under $x\mapsto1-x$ the types remain i.i.d.\ uniform on $[\alpha,1-\alpha]$ and
$W(1-x,1-y)=1-W(x,y)$, so the transformed model attaches to each pair the complementary
edge probability. A $k$-set induces a $d$-regular subgraph of $G$ iff it induces a
$(k-1-d)$-regular subgraph of the complement.
\end{proof}

\begin{lemma}[Spectral floor, valid for $\abar\le1/2$]\label{lem:floor}
Let $\mu_2\ge\dots\ge\mu_k$ be the adjacency eigenvalues of $H\in\HH(k,d)$ on
$\mathbf 1^\perp$, so that the eigenvalues of $Q_H$ there are
\begin{equation}\label{eq:qi}
  q_i=\tfrac18\Bigl[\tfrac d{\abar^2}+\tfrac{k-2-d}{(1-\abar)^2}
  +\bigl(\tfrac1{\abar^2}-\tfrac1{(1-\abar)^2}\bigr)\mu_i\Bigr].
\end{equation}
If $\abar\le1/2$ then $q_i\ge\frac{k-2}{8(1-\abar)^2}$ for every $i$.
\end{lemma}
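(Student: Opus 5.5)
The plan is to first justify the eigenvalue formula \eqref{eq:qi} and then bound each $q_i$ from below using only the crude spectral bound $\mu_i\ge -d$.

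\textbf{Step 1: the formula.} Since $H$ is $d$-regular, $A_H\mathbf 1=d\mathbf 1$. The matrix $A_H$ is symmetric, so it preserves $\mathbf 1^\perp$ and is diagonalised there by an orthonormal basis with eigenvalues $\mu_2,\dots,\mu_k$. By \eqref{eq:Q}, $Q_H$ restricted to $\mathbf 1^\perp$ is a linear combination of $I$ and $A_H$. Hence in that same basis its eigenvalues are exactly the $q_i$ of \eqref{eq:qi}.

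\textbf{Step 2: the bound.} Write $c:=\abar^{-2}-(1-\abar)^{-2}$. The hypothesis $\abar\le1/2$ is used exactly once: it gives $\abar\le1-\abar$, hence $c\ge0$. Every adjacency eigenvalue of a $d$-regular graph satisfies $|\mu_i|\le d$, since the spectral radius is at most the maximum degree. Therefore $c\mu_i\ge -cd$, and
\[
8q_i\ \ge\ \frac d{\abar^2}+\frac{k-2-d}{(1-\abar)^2}-\frac d{\abar^2}+\frac d{(1-\abar)^2}\ =\ \frac{k-2}{(1-\abar)^2}.
\]
This is the claim.

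\textbf{Remarks and main obstacle.} There is no real obstacle in the argument. The one point to be careful about is the sign of $c$. For $\abar>1/2$ the coefficient $c$ is negative, and the relevant extreme is then $\mu_i$ near its maximum $d$ on $\mathbf 1^\perp$, which can occur when $H$ is disconnected. So the floor genuinely needs $\abar\le1/2$, and this is the range to which Lemma~\ref{lem:compl} lets us reduce.

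The bound is sharp exactly when $\mu_i=-d$, that is, when $H$ has a bipartite component. Equality also holds trivially when $\abar=1/2$, since then $c=0$.
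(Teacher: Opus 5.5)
Your proof is correct and takes the same route as the paper: note that the coefficient of $\mu_i$ in \eqref{eq:qi} is nonnegative when $\abar\le1/2$, then substitute the universal bound $\mu_i\ge-d$. Your Step~1 justification of the eigenvalue formula and your remarks on when the bound is sharp are accurate; the paper leaves both implicit.
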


\begin{proof}
For $\abar\le1/2$ the coefficient of $\mu_i$ in \eqref{eq:qi} is nonnegative, and
$\mu_i\ge-d$ always; substituting $\mu_i=-d$ gives the stated floor.
\end{proof}

\begin{remark}[The floor fails for $\abar>1/2$, and v1's determinant estimate was false]
\label{rem:specfalse}
For $H_r=\overline{2K_{r,r}}$ ($k=4r$, $d=3r-1$, $\lambda\to3/4$) the eigenvalue
$\mu=r-1$ gives, at $\abar=\lambda$, $q_i\sim\tfrac89r$ against the floor value $\sim8r$:
off by a factor tending to $9$. The same family refutes the determinant estimate asserted
in v1, $F_{\mathrm{spec}}\le((1-\lambda)/\lambda)^{(1+o(1))/(4\lambda)}$, even in the
range $\lambda\le1/2$: for $2K_{r,r}$ ($\lambda\to1/4$) the true correction tends to
$3\sqrt{3/5}\,e^{2/3}=4.526\ldots$, exceeding the asserted bound $3$. Both counterexamples
are from the Dyson--McKay critique. The correct statement is
Proposition~\ref{prop:trace}: the correction is uniformly $e^{O(1)}$ for $\lambda$
bounded away from $0$ --- which is what the mechanism needs on the bulk densities, the
rest being removed at the outset by the large-deviation estimate of \cite{DM26} --- but
the specific closed form claimed in v1 is withdrawn.
\end{remark}

We now bound the determinant correction correctly. Fix $H\in\HH(k,d)$ with
$1\le d\le(k-1)/2$, set $\lambda=d/(k-1)$, evaluate \eqref{eq:qi} at $\abar=\lambda$, and
let
\[
  I:=\frac{k}{8\lambda(1-\lambda)},\qquad
  F_{\mathrm{spec}}(H):=\prod_{i=2}^{k}\Bigl(\frac{I}{q_i}\Bigr)^{1/2},
\]
so that $\det(Q_H|_{\perp\mathbf1})^{-1/2}=I^{-(k-1)/2}F_{\mathrm{spec}}(H)$.

\begin{lemma}\label{lem:elem}
Let $-1<\tau_0\le0$. For all $\tau\ge\tau_0$,
$\;-\log(1+\tau)\le-\tau+\dfrac{\tau^2}{2(1+\tau_0)^2}$.
\end{lemma}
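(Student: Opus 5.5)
We reduce the inequality to a statement about a single function of one real variable and handle it by a convexity argument. Define
\[
  \phi(\tau):=-\log(1+\tau)+\tau-\frac{\tau^2}{2(1+\tau_0)^2},\qquad \tau\ge\tau_0>-1;
\]
we must show $\phi\le0$ on $[\tau_0,\infty)$. Note $\phi(0)=0$ and
\[
  \phi'(\tau)=-\frac1{1+\tau}+1-\frac{\tau}{(1+\tau_0)^2}
  =\frac{\tau}{1+\tau}-\frac{\tau}{(1+\tau_0)^2}
  =\tau\Bigl(\frac1{1+\tau}-\frac1{(1+\tau_0)^2}\Bigr).
\]
The plan is to read off the sign of $\phi'$ on each side of $0$ and conclude that $\phi$ has its global maximum at $\tau=0$ on the allowed range.

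For $\tau\ge0$, since $(1+\tau_0)^2\le1\le 1+\tau$ (using $\tau_0\le0$), we have $\frac1{1+\tau}\le1\le\frac1{(1+\tau_0)^2}$. Hence the bracket is $\le0$, and since $\tau\ge0$ we get $\phi'\le0$. So $\phi$ is nonincreasing on $[0,\infty)$, and $\phi(\tau)\le\phi(0)=0$ there.

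For $\tau_0\le\tau<0$, we need the bracket to be $\ge0$, i.e.\ $1+\tau\le(1+\tau_0)^2$. This is where the argument must be checked carefully, and it is the main obstacle: it is \emph{not} true in general, since $(1+\tau_0)^2<1+\tau_0\le1+\tau$ when $\tau_0<0$. So the derivative argument needs modification on the negative side. I would instead bound directly using the Taylor remainder. By Taylor's theorem with Lagrange remainder about $0$,
\[
  -\log(1+\tau)=-\tau+\frac{\tau^2}{2(1+\xi)^2}
\]
for some $\xi$ between $\tau$ and $0$. For $\tau\in[\tau_0,0)$ we have $\xi\in(\tau,0)\subset[\tau_0,0]$, so $1+\xi\ge1+\tau_0>0$, giving $\frac1{(1+\xi)^2}\le\frac1{(1+\tau_0)^2}$ and hence the claim. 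The same Taylor formula also handles $\tau\ge0$ uniformly: there $\xi\ge0$, so $\frac1{(1+\xi)^2}\le1\le\frac1{(1+\tau_0)^2}$. Thus the cleanest proof is the single Lagrange-remainder computation, using $\frac{d^2}{d\tau^2}\bigl(-\log(1+\tau)\bigr)=(1+\tau)^{-2}$ and the monotonicity of $(1+\xi)^{-2}$ in $\xi$, with the derivative computation above serving only as a check on the range $\tau\ge0$.

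Equality at $\tau=0$ shows the inequality is tight there. The only real point to verify is that the intermediate point $\xi$ never falls below $\tau_0$, which holds because it lies between $\tau$ and $0$ with $\tau\ge\tau_0$ and $\tau_0\le0$.
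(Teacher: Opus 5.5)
Your final argument is correct, and it takes a different route from the paper. Taylor's theorem with Lagrange remainder gives $-\log(1+\tau)=-\tau+\tau^2/(2(1+\xi)^2)$ with $\xi$ between $0$ and $\tau$. Since $\xi\ge\min(\tau,0)\ge\tau_0>-1$, you get $(1+\xi)^{-2}\le(1+\tau_0)^{-2}$ in both cases.

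The paper instead carries out exactly the derivative analysis you abandoned. In your notation it shows that $-\phi$ is nonincreasing on $[\tau_0,0]$ and nondecreasing on $[0,\infty)$, so $-\phi$ is minimized at $\tau=0$, where it vanishes.

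Your ``main obstacle'' comes from a sign slip. For $\tau<0$ you need $\phi'(\tau)\ge0$, so that $\phi$ increases up to $\phi(0)=0$. Since $\tau<0$, this requires the bracket to be $\le0$, i.e.\ $1+\tau\ge(1+\tau_0)^2$. It does not require the bracket to be $\ge0$. The needed inequality is precisely the chain $(1+\tau_0)^2\le1+\tau_0\le1+\tau$ that you wrote down as the supposed failure, so the monotonicity argument works as it stands on both sides of $0$.

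As for what each approach buys:
\begin{itemize}
\item Your Taylor argument is a single computation, uniform in the sign of $\tau$. It makes transparent that $(1+\tau_0)^{-2}$ is simply the supremum of the second derivative $(1+\xi)^{-2}$ over the range where $\xi$ can lie.
\item The paper's sign analysis uses only first derivatives. On the negative side it needs only $1+\tau\ge(1+\tau_0)^2$, so it actually gives the inequality on the larger range $\tau\ge(1+\tau_0)^2-1$. That slack is not used in Proposition~\ref{prop:trace}.
\end{itemize}
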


\begin{proof}
Let $\varphi(\tau):=\frac{\tau^2}{2(1+\tau_0)^2}-\tau+\log(1+\tau)$; then $\varphi(0)=0$
and $\varphi'(\tau)=\frac{\tau\,[(1+\tau)-(1+\tau_0)^2]}{(1+\tau_0)^2(1+\tau)}$. For
$\tau\ge0$ we have $1+\tau\ge1\ge(1+\tau_0)^2$, so $\varphi'\ge0$; for $\tau_0\le\tau<0$
we have $1+\tau\ge1+\tau_0\ge(1+\tau_0)^2$, so the bracket is $\ge0$ and $\varphi'\le0$.
Hence $\varphi$ attains its minimum $0$ at $\tau=0$.
\end{proof}

\begin{proposition}[Uniform determinant correction]\label{prop:trace}
Let $k\ge6$, $1\le d\le(k-1)/2$, $\lambda=d/(k-1)$, $\abar=\lambda$. Then for every
$H\in\HH(k,d)$,
\[
  1\;<\;F_{\mathrm{spec}}(H)\;\le\;
  \exp\Bigl(1+\frac{(1-\lambda)(1-2\lambda)^2}{4\lambda^3}+\frac{C}{\lambda^3k}\Bigr)
\]
with an absolute constant $C$. In particular, for $\lambda\ge\lambda_0>0$ the correction
is bounded by a constant depending only on $\lambda_0$, uniformly in $H$ and $k$.
\end{proposition}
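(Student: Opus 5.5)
Throughout, write $q_i/I=1+\tau_i$. Then $\log F_{\mathrm{spec}}(H)=-\tfrac12\sum_{i=2}^k\log(1+\tau_i)$, and both bounds of Proposition~\ref{prop:trace} will follow from two facts about the $\tau_i$: their exact sum, which is a trace, and a lower bound on each of them, which comes from Lemma~\ref{lem:floor}.

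\emph{Computing the $\tau_i$.} Substitute $\abar=\lambda$ and $d=\lambda(k-1)$ into \eqref{eq:qi}. Using $d/\lambda^2=(k-1)/\lambda$ and $(k-2-d)/(1-\lambda)^2=(k-1)/(1-\lambda)-1/(1-\lambda)^2$, together with $\bigl(\tfrac1{\lambda^2}-\tfrac1{(1-\lambda)^2}\bigr)\lambda(1-\lambda)=\tfrac{1-2\lambda}{\lambda(1-\lambda)}$, a short computation gives
\[
  \tau_i=\frac1k\Bigl(-\frac1{1-\lambda}+\frac{(1-2\lambda)\,\mu_i}{\lambda(1-\lambda)}\Bigr).
\]
Since $\operatorname{tr}A_H=0$ and the top eigenvalue of $A_H$ is $d$, we have $\sum_{i\ge2}\mu_i=-d$. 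Hence the sum of the $\tau_i$ is exact:
\[
  \sum_{i\ge2}\tau_i=-\frac{k-1}{k}\cdot\frac{1+(1-2\lambda)}{1-\lambda}=-\frac{2(k-1)}{k}.
\]
Similarly $\operatorname{tr}A_H^2=kd$ gives $\sum_{i\ge2}\mu_i^2=kd-d^2$, and therefore
\[
  \sum_{i\ge2}\tau_i^2\le\frac{(1-2\lambda)^2}{\lambda(1-\lambda)^2}+O\Bigl(\frac1{\lambda^2k}\Bigr).
\]

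\emph{Strict lower bound.} Since $\log$ is concave, Jensen gives $\sum_{i\ge2}\log(1+\tau_i)\le(k-1)\log\bigl(1+\overline\tau\bigr)$, where $\overline\tau=-2/k$ is the mean of the $\tau_i$. This is negative, so $F_{\mathrm{spec}}(H)>1$. Positivity of the $q_i$ (Lemma~\ref{lem:pos}) guarantees $1+\tau_i>0$, so all logarithms are defined.

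\emph{Upper bound.} Lemma~\ref{lem:floor} applies because $\lambda\le1/2$, and it gives $q_i/I\ge\frac{(k-2)\lambda}{k(1-\lambda)}$. So every $\tau_i\ge\tau_0:=\frac{(k-2)\lambda}{k(1-\lambda)}-1$. This $\tau_0$ lies in $(-1,0]$ for $k\ge6$ and $\lambda\le1/2$, so Lemma~\ref{lem:elem} applies to each $\tau_i$. Summing its conclusion over $i\ge2$ gives
\[
  \log F_{\mathrm{spec}}(H)\le-\tfrac12\sum_{i\ge2}\tau_i+\frac{\sum_{i\ge2}\tau_i^2}{4(1+\tau_0)^2}.
\]
The first term is $(k-1)/k\le1$. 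For the second, $(1+\tau_0)^{-2}=\frac{(1-\lambda)^2}{\lambda^2}\bigl(1+O(1/k)\bigr)$. Combined with the bound on $\sum\tau_i^2$, the main term is $\frac{(1-2\lambda)^2}{4\lambda^3}$, which is at most the stated $\frac{(1-\lambda)(1-2\lambda)^2}{4\lambda^3}$ up to the bookkeeping of the constant. All discarded terms are $O(1/(\lambda^3k))$, which gives the $C/(\lambda^3k)$ error.

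\emph{Main obstacle.} The argument has no spectral difficulty: only the two traces $\operatorname{tr}A_H$ and $\operatorname{tr}A_H^2$ and the floor enter, which is why the bound is uniform in $H$. The delicate point is the quadratic remainder. The floor makes $1+\tau_0$ as small as about $\lambda/(1-\lambda)$, and this small denominator produces the $\lambda^{-3}$ blow-up. One must check carefully that the $O(1/k)$ corrections in $\tau_0$ and in $\sum\mu_i^2$ combine into an error of order $1/(\lambda^3k)$ with an absolute constant. The hypothesis $k\ge6$ is what keeps $(k-2)/k$ bounded away from $0$ in this step.
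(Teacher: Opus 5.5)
Your route is the paper's own: the same closed form for $\tau_i$, the two trace identities, the floor from Lemma~\ref{lem:floor}, Lemma~\ref{lem:elem} summed over $i$, and Jensen for $F_{\mathrm{spec}}>1$. The gap is an arithmetic slip in the quadratic term, and it makes your final comparison go the wrong way.

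Write $\tau_i=B\mu_i-A$ with $A=\frac{1}{(1-\lambda)k}$ and $B=\frac{1-2\lambda}{\lambda(1-\lambda)k}$. The dominant piece of $\sum_{i\ge2}\tau_i^2$ is $B^2\sum_{i\ge2}\mu_i^2=B^2d(k-d)$. Since $d(k-d)=\lambda(1-\lambda)k^2-\lambda(1-2\lambda)k-\lambda^2\le\lambda(1-\lambda)k^2$, this piece is at most $\frac{(1-2\lambda)^2}{\lambda(1-\lambda)}$. That is one power of $(1-\lambda)$ fewer in the denominator than you wrote.

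Your bound is true but lossy. After multiplying by $\frac{1}{4(1+\tau_0)^2}\approx\frac{(1-\lambda)^2}{4\lambda^2}$, it gives the main term $\frac{(1-2\lambda)^2}{4\lambda^3}$. This is \emph{larger} than the stated $\frac{(1-\lambda)(1-2\lambda)^2}{4\lambda^3}$, by $\frac{(1-2\lambda)^2}{4\lambda^2}$. That excess does not depend on $k$: at $\lambda=1/4$ it equals $1$, so you get $e^5$ where the statement has $e^4$. No choice of $C$ can absorb it into $C/(\lambda^3k)$. So your sentence ``at most the stated \ldots\ up to the bookkeeping of the constant'' is false. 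As written, you prove only a weaker inequality: enough for the ``in particular'' clause, but not the displayed bound.

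There is a second, smaller slip. You put the lower-order part of $\sum_{i\ge2}\tau_i^2$ at $O(1/(\lambda^2k))$. Multiplied by $(1-\lambda)^2/(4\lambda^2)$, that would give $O(1/(\lambda^4k))$, not the $O(1/(\lambda^3k))$ you claim. In fact those terms are $(k-1)A^2+2ABd\le\frac{3}{(1-\lambda)^2k}$, which is uniformly $O(1/k)$ for $\lambda\le1/2$.

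The repair uses two bounds:
\begin{itemize}
\item $\sum_{i\ge2}\tau_i^2\le\frac{(1-2\lambda)^2}{\lambda(1-\lambda)}+\frac{3}{(1-\lambda)^2k}$;
\item $(1+\tau_0)^{-2}\le\bigl(\frac{1-\lambda}{\lambda}\bigr)^2\bigl(1+\frac3k\bigr)^2$, which is where $k\ge6$ is actually used, since it gives $\frac{k}{k-2}\le1+\frac3k$.
\end{itemize}
With these, the main term comes out exactly as $\frac{(1-\lambda)(1-2\lambda)^2}{4\lambda^3}$ and all remaining terms are $O(1/(\lambda^3k))$. This is precisely the paper's proof.
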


\begin{proof}
Write $q_i/I=1+\tau_i$. From \eqref{eq:qi} with $d=\lambda(k-1)$ one computes exactly
\[
  \tau_i=-\frac{1}{(1-\lambda)k}+\frac{(1-2\lambda)\,\mu_i}{\lambda(1-\lambda)k}
  \qquad(i=2,\dots,k).
\]
Using $\sum_{i\ge2}\mu_i=-d$ and $\sum_{i\ge2}\mu_i^2=\operatorname{tr}A_H^2-d^2=
kd-d^2=d(k-d)$:
\[
  \sum_{i\ge2}\tau_i=-\frac{k-1}{(1-\lambda)k}\bigl[1+(1-2\lambda)\bigr]
  =-\frac{2(k-1)}{k},
\]
a quantity independent of $\lambda$, and, expanding
$\tau_i^2=A^2-2AB\mu_i+B^2\mu_i^2$ with $A=\frac1{(1-\lambda)k}$,
$B=\frac{1-2\lambda}{\lambda(1-\lambda)k}$ and bounding the three resulting sums,
\[
  \sum_{i\ge2}\tau_i^2\;\le\;\frac{(1-2\lambda)^2}{\lambda(1-\lambda)}
  +\frac{4}{(1-\lambda)^2k}.
\]
By Lemma~\ref{lem:floor} at $\abar=\lambda\le1/2$,
$1+\tau_i=q_i/I\ge\frac{\lambda(k-2)}{(1-\lambda)k}=\frac{\lambda}{1-\lambda}
\bigl(1-\tfrac2k\bigr)=:1+\tau_0$, and $\tau_0\le0$ since $\lambda\le1/2$. Since
$(1+\tau_0)^{-2}=\bigl(\tfrac{1-\lambda}{\lambda}\bigr)^2\bigl(\tfrac k{k-2}\bigr)^2
\le\bigl(\tfrac{1-\lambda}{\lambda}\bigr)^2\bigl(1+\tfrac3k\bigr)^2$ for $k\ge6$,
Lemma~\ref{lem:elem} gives
\[
  2\log F_{\mathrm{spec}}(H)=-\sum_{i\ge2}\log(1+\tau_i)
  \le-\sum_{i\ge2}\tau_i+\frac{1}{2(1+\tau_0)^2}\sum_{i\ge2}\tau_i^2
  \le2+\frac12\Bigl(\frac{1-\lambda}{\lambda}\Bigr)^2\Bigl(1+\frac3k\Bigr)^2
  \Bigl[\frac{(1-2\lambda)^2}{\lambda(1-\lambda)}+\frac{4}{(1-\lambda)^2k}\Bigr],
\]
which is the stated bound after collecting the $O(1/k)$ terms. For the lower bound, by
concavity of $\log$,
$\sum_{i\ge2}\log(1+\tau_i)\le(k-1)\log\bigl(1+\frac{1}{k-1}\sum\tau_i\bigr)
=(k-1)\log(1-\tfrac2k)<0$, so $F_{\mathrm{spec}}>1$: the correction is always a loss,
but never more than a constant.
\end{proof}

\begin{remark}[Sharpness and sanity checks]
At $\lambda=1/2$ all $q_i$ are equal and $F_{\mathrm{spec}}=(k/(k-2))^{(k-1)/2}$, which
approaches the bound $e$ from above as $k\to\infty$; at finite $k$ the excess is covered
by the $C/(\lambda^3k)$ term. For the family $2K_{r,r}$ at $\lambda\to1/4$ the true
limit is $4.526\ldots$ and the bound evaluates to $e^{4}\approx54.6$: comfortable, not
tight. Numerically, for random $d$-regular $H$ the geometric mean of $q_i/I$ is
$0.962,0.977,0.984,0.988,0.992$ at $k=60,100,140,200,300$ (the generating script is in
the repository; see Code availability) --- a consistency check for
Proposition~\ref{prop:trace}, not a proof of anything.
\end{remark}

\begin{remark}[The assembled computation, and what is missing]\label{rem:missing}
Freeze $\abar=\lambda$, ignore the outlier region, and collect: (i) the choice of the
$k$-set, $\binom nk\le(Cek)^k$ for $n\le Ck^2$; (ii) $|\HH(k,d)|\lambda^m(1-\lambda)^{K-m}
=\Theta(1)\bigl[\binom{k-1}d\lambda^d(1-\lambda)^{k-1-d}\bigr]^k$
\cite[Lemma~2.1]{DM26}, the bracket being
$(1+O(1/k))(2\pi\lambda(1-\lambda))^{-1/2}k^{-1/2}$; (iii) the Gaussian integral over the
$k-1$ centred coordinates, $\pi^{(k-1)/2}I^{-(k-1)/2}F_{\mathrm{spec}}$ against the
uniform type density $(1-2\alpha)^{-(k-1)}$, with $F_{\mathrm{spec}}=e^{O(1)}$ by
Proposition~\ref{prop:trace} (valid on the bulk $\lambda\ge\lambda_0$; densities outside
the bulk are removed by the large-deviation estimate of \cite{DM26}). The $k$-th root of the product is
\[
  Cek\cdot\frac{k^{-1/2}}{\sqrt{2\pi\lambda(1-\lambda)}}\cdot
  \frac{1}{1-2\alpha}\sqrt{\frac{8\pi\lambda(1-\lambda)}{k}}
  =\frac{2eC}{1-2\alpha},
\]
independent of $\lambda$: with the ideal curvature every density is equally binding,
which is where $N_{\ge k}\gtrsim k^2(1-2\alpha)/(2e)$, i.e.\ the constant $\sqrt{2e}$ as
$\alpha\to0$, comes from. Two analytic steps separate this computation from a proof:
\emph{(a) localization of $\abar$} --- the prefactor $\abar^m(1-\abar)^{K-m}$, whose
logarithm has curvature $\Theta(K)$ in $\abar$, must be retained while integrating, and
the determinant estimate made uniform over $|\abar-\lambda|=O(k^{-1}\log k)$, rather than
maximising the prefactor and freezing the matrix separately (v1 did the latter, which is
not valid); \emph{(b) the outlier region} $\{\max_i|y_i|>\delta\alpha\}$ ---
Proposition~\ref{prop:pointwise} gives a valid pointwise bound there by discarding bad
pairs, but the remaining good-pair form is coupled to $\abar$ and the region must be
integrated honestly, not estimated by an entropy count as in v1. We make no claim that
(a) and (b) are hard, but they are not done here, and until they are done this section is
a computation, not a proof. Its role in this paper is to motivate
Conjecture~\ref{conj:main} and to fix the critical scales ($s^2=\Theta(1)$, i.e.\
$|y_i|\approx k^{-1/2}$) used in Section~\ref{sec:lower}.
\end{remark}

\begin{remark}[Orders must be summed]\label{rem:orders}
A first-moment bound at a single order $k$ bounds nothing: a regular induced subgraph of
order $\ell>k$ need not contain one of order $k$. The fix is free but must be said: if
$n\le Ck^2$ then $n\le C\ell^2$ for every $\ell\ge k$, so the computation of
Remark~\ref{rem:missing} applies at every order $\ell\ge k$ with the same base
$2eC/(1-2\alpha)<1$ (for bulk densities; the rest is removed by the large-deviation
estimate at every order), and $\sum_{\ell\ge k}\sum_d\E X_{\ell,d}\le
\sum_{\ell\ge k}\mathrm{poly}(\ell)\,\mathrm{base}^\ell\to0$. (v1 omitted this
summation.)
\end{remark}

\begin{remark}[Calibration]
Substituting the uniform curvature $c_2=1/(2(1-\alpha)^2)$ of \cite[Lemma~2.2]{DM26} for
the ideal one in the computation of Remark~\ref{rem:missing} reproduces the base of
\cite{DM26} (first version) and their value $0.99986$ at $\alpha=0.191$, $C=9/163$, to
five decimal places. The computation is thus calibrated at its uniform-curvature control
point.
\end{remark}

\section{The lower-bound direction}\label{sec:lower}

Let $W$ be as in Definition~\ref{def:model} with least Lipschitz constant $L>0$, and let
$G\sim\mathcal G(n,W)$. We first reveal the types $x=(x_1,\dots,x_n)$; all probabilities
below are conditional on $x$, over the edge randomness only, and ``a.a.s.\ over types''
refers to the type randomness. Fix $c_0\in[0,1-w]$ and let
\[
  B:=\{i:x_i\in[c_0,c_0+w]\},\qquad M:=|B|,
\]
the \emph{window}. By Chernoff, a.a.s.\ $M=wn(1+o(1))$ whenever $wn\ge n^{1/10}$, say. For $S\subseteq B$ with $|S|=y$ write $A_S$ for the event that $G[S]$ is regular,
$\mu_v(S):=\sum_{u\in S\setminus\{v\}}W(x_u,x_v)$ and
$\bar\mu_S:=y^{-1}\sum_{v\in S}\mu_v(S)$. All edge probabilities lie in $[p_0,1-p_0]$,
and within the window
\begin{equation}\label{eq:drift}
  |\mu_u(S)-\mu_v(S)|\;\le\;(y-2)Lw\;\le\;yLw\qquad(u,v\in S),
\end{equation}
since $|W(x_u,z)-W(x_v,z)|\le L|x_u-x_v|\le Lw$ for every $z$, and the two
self-exclusion terms cancel exactly by the symmetry of $W$.

\subsection{The two hypotheses}

\begin{hypothesis}[Local limit lower bound, $\mathrm{LLT}(w,y)$]\label{hyp:llt}
There exist $c,C>0$ depending only on $p_0$ such that a.a.s.\ over types, for every
$S\subseteq B$ with $|S|=y$ and $t$ the least integer $\ge\bar\mu_S$ with $ty$ even (so
that $|t-\bar\mu_S|\le2$; no $t$-regular graph of order $y$ exists when $ty$ is odd),
\[
  \PP\bigl(G[S]\text{ is $t$-regular}\mid x\bigr)\;\ge\;
  \Bigl(\frac{c}{\sqrt y}\Bigr)^{y-1}
  \exp\Bigl(-\frac{C}{y}\sum_{v\in S}\bigl(t-\mu_v(S)\bigr)^2\Bigr).
\]
\end{hypothesis}

\begin{remark}[Status of Hypothesis~\ref{hyp:llt}]\label{rem:lltstatus}
For constant $W\equiv p$ the drift term is $O(1)$ and the statement follows from the
enumeration of regular graphs \cite{MW90}; the parity constraint on $t$ is what makes a
positive lower bound possible at all. What is required in general is a
\emph{local limit lower bound for the joint degree vector of an inhomogeneous random
graph, in growing dimension $y-1$, uniform over the sets $S$}. No such theorem is in the
literature, and v1 of this paper erred in treating it as an available input; it is the
central open ingredient of this program. Within the window the model is nearly
homogeneous --- edge probabilities vary by at most $2Lw\to0$ --- so a perturbation of the
homogeneous local limit behaviour underlying \cite{MW90,LW24} is the natural route; the
author is pursuing it and hopes to return to it elsewhere.
\end{remark}

\begin{hypothesis}[Correlation at sublinear overlap, $\mathrm{COR}(w,y)$]\label{hyp:cor}
There exists $C>0$ such that a.a.s.\ over types, uniformly over $S,S'\subseteq B$ with
$|S|=|S'|=y$ and $\kappa:=|S\cap S'|\in[2,\,y/\sqrt{\log n}\,]$,
\[
  \PP(A_S\cap A_{S'}\mid x)\;\le\;(1+o(1))\,e^{C\kappa^2/y}\,
  \PP(A_S\mid x)\,\PP(A_{S'}\mid x).
\]
\end{hypothesis}

\begin{remark}[Status of Hypothesis~\ref{hyp:cor}]\label{rem:corstatus}
For $\kappa\le1$ the two events involve disjoint edge sets and the inequality holds with
equality and constant $1$. At the other extreme no such bound can hold: at $\kappa=y$ the
left side over the right is $1/\PP(A_S)\ge e^{cy\log y}$ \emph{unconditionally} (apply
the argument of Lemma~\ref{lem:bigoverlap} below to a half-split of $S$), which is why
the hypothesis stops at sublinear overlaps and the complementary range is handled by
Lemma~\ref{lem:bigoverlap}, with no correlation input. Two natural attempts fall short: the crude conditioning
bound $\PP(A_{S'}\mid E_T)\le\PP(A_{S'})/\min_e\PP(E_T=e)\le
\PP(A_{S'})\,e^{\kappa^2\log(1/p_0)}$ misses the factor $1/y$ in the exponent and is
useless at the typical overlap $\kappa\asymp y^2/M\to\infty$; and the argument of v1 ---
single-edge flips plus bounded differences --- is invalid as written: a flip couples the
degree constraints of all vertices, the relevant variances are $\Theta(y-\kappa)$ rather
than $\Theta(y)$, and $\log\PP(A_S\mid E_T)$ can be $-\infty$. (These objections are from
the Dyson--McKay critique and we adopt them.) The hypothesis is stated in the shape v1
aimed for, on the range where it is actually needed. We expect, but do not claim, that
Hypothesis~\ref{hyp:cor} would follow from a \emph{two-sided} version of
Hypothesis~\ref{hyp:llt} with relative precision $1+o(1)$: conditioning on $E_T$ shifts
the $T$-degrees by $O(\sqrt\kappa)$ typically, and the Gaussian drift factor prices such
shifts at exactly $e^{O(\kappa^2/y)}$ --- so the two hypotheses are plausibly one, at
different strengths.
\end{remark}

\subsection{Conditional first moment}

\begin{proposition}[First moment under $\mathrm{LLT}$]\label{prop:first}
Fix $\epsilon\in(0,1/8)$, let $w=n^{-1/4}$, $y=\lceil n^{1/2-\epsilon}\rceil$, and assume
Hypothesis~\ref{hyp:llt} at these scales. Then a.a.s.\ over types,
\[
  \E[X\mid x]\;\ge\;\exp\Bigl(\tfrac32\epsilon\,y\log n\,(1-o(1))\Bigr)
  \longrightarrow\infty,
  \qquad X:=\sum_{S\subseteq B,\ |S|=y}\mathbf 1_{A_S}.
\]
\end{proposition}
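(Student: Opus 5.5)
By linearity, $\E[X\mid x]=\sum_{S\subseteq B,|S|=y}\PP(A_S\mid x)$. Since $A_S$ contains the event that $G[S]$ is $t$-regular for the particular $t$ in Hypothesis~\ref{hyp:llt}, each summand is at least the right-hand side of that hypothesis. So the plan is to bound the drift factor uniformly in $S$, multiply by the number of $y$-subsets of the window, and compare exponents.

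\emph{Step 1 (window size).} By Chernoff, a.a.s.\ $M=wn(1+o(1))=n^{3/4}(1+o(1))$, because $wn=n^{3/4}\ge n^{1/10}$. Also $y=o(M)$, since $1/2-\epsilon<3/4$. Hence
\[
\binom{M}{y}\ge\Bigl(\frac{M}{y}\Bigr)^{y}=\exp\bigl(y\,(\log M-\log y)\bigr)=\exp\bigl(y\log n\,(\tfrac14+\epsilon+o(1))\bigr).
\]

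\emph{Step 2 (drift factor).} For $v\in S$, $\bar\mu_S$ is an average of the $\mu_u(S)$, so \eqref{eq:drift} gives $|\mu_v(S)-\bar\mu_S|\le yLw$. Combined with $|t-\bar\mu_S|\le2$, this yields $|t-\mu_v(S)|\le yLw+2$. Therefore
\[
\frac{C}{y}\sum_{v\in S}(t-\mu_v(S))^2\le C(yLw+2)^2=O\bigl(L^2y^2w^2+1\bigr)=O\bigl(n^{1-2\epsilon}\cdot n^{-1/2}\bigr)=O(n^{1/2-2\epsilon}).
\]
The key comparison is that this is $o(y)=o(n^{1/2-\epsilon})$, and a fortiori $o(y\log n)$. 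The choice $w=n^{-1/4}$ is made precisely so that the drift cost $y^2w^2$ is negligible against $y$, while the window still has polynomially many vertices.

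\emph{Step 3 (LLT main term).} $(c/\sqrt y)^{y-1}=\exp\bigl(-\tfrac12 y\log y\,(1+o(1))\bigr)=\exp\bigl(-y\log n\,(\tfrac14-\tfrac\epsilon2+o(1))\bigr)$, using $\log y=(\tfrac12-\epsilon)\log n+O(1)$. This bound holds a.a.s.\ simultaneously for all $S$, since the hypothesis is uniform in $S$.

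\emph{Step 4 (combine).} Intersecting the two a.a.s.\ events and multiplying the three bounds gives exponent
\[
y\log n\Bigl[\tfrac14+\epsilon-\tfrac14+\tfrac\epsilon2\Bigr](1-o(1))-o(y)=\tfrac32\epsilon\,y\log n\,(1-o(1)),
\]
which tends to infinity.

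The only delicate point is the bookkeeping of the $o(1)$ terms: all error terms must be $o(y\log n)$. These terms are $\log(1+o(1))$ in $M$, the constant $c$, the ceiling in $y$, and the drift term. Each is plainly of lower order, so I do not expect a real obstacle here. The substance sits entirely in Hypothesis~\ref{hyp:llt}. One check is needed: $t$ is defined per $S$, and $t$-regularity implies $A_S$ for every $t$, so the lower bound is legitimate.
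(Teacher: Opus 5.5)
Your proposal is correct and follows the paper's proof essentially step for step. You lower-bound $\PP(A_S\mid x)$ by the $t$-regular probability from Hypothesis~\ref{hyp:llt}, use \eqref{eq:drift} to make the drift exponent $O(L^2y^2w^2)=O(n^{1/2-2\epsilon})=o(y)$, and compare $y\log(M/y)$ with $\frac y2\log y$. The only cosmetic difference is that the paper absorbs the additive $2$ via $2+yLw\le 2yLw$, whereas you carry $(yLw+2)^2=O(L^2y^2w^2+1)$; the two are equivalent here.
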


\begin{proof}
On the a.a.s.\ type event, $M=n^{3/4}(1+o(1))$. By \eqref{eq:drift} and
$|t-\bar\mu_S|\le2$, for every $v\in S$ we have $|t-\mu_v(S)|\le2+yLw\le2yLw$ for $n$
large, so the drift exponent in Hypothesis~\ref{hyp:llt} is at most $4CL^2y^2w^2=
4CL^2\,n^{1/2-2\epsilon}=o(y)$. Hence, using $A_S\supseteq\{G[S]\ t\text{-regular}\}$,
\[
  \log\E[X\mid x]\ \ge\ \log\binom My+\log\min_S\PP(A_S\mid x)
  \ \ge\ y\log\frac My-\frac y2\log y-O(y)
\]
\[
  =\ y\log n\Bigl[\bigl(\tfrac14+\epsilon\bigr)-\bigl(\tfrac14-\tfrac\epsilon2\bigr)
  \Bigr](1-o(1))
  \ =\ \tfrac32\epsilon\,y\log n\,(1-o(1)).\qedhere
\]
\end{proof}

\begin{remark}[The threshold, and why the constant does not follow]\label{rem:constant}
Writing $y=n^{a}$, $w=n^{-b}$, the computation of Proposition~\ref{prop:first} gives
first-moment exponent $\bigl[(1-b-a)-\frac a2\bigr]y\log n$ up to $O(y)$, with the drift
negligible iff $yw^2=O(1)$, i.e.\ $a\le2b$. The largest $a$ with nonnegative exponent
subject to $a\le2b$ is $a=1/2$ at $b=1/4$ (v1 misstated this as an unconstrained
maximum). At that critical point the displayed terms cancel \emph{exactly}, and
everything --- including the constant of Conjecture~\ref{conj:main} --- is decided by the
terms of order $y$: the constants in the entropy, in the local limit theorem, in the
drift, and in the covariance determinant. Proposition~\ref{prop:first} controls none of
them; in particular the constant $\sqrt{2e}$ does \emph{not} follow from it, and we make
no such claim. The constant is motivated by the agreement of two independent
calculations --- the mechanism of Section~\ref{sec:mech} and the logistic model of
\cite{DM26} --- and by the numerics of Section~\ref{sec:num}. Making the order-$y$ terms
match is the second open ingredient of the program, beyond
Hypotheses~\ref{hyp:llt}--\ref{hyp:cor}. A refinement worth recording: for the
construction of \cite{DM26} at fixed $\alpha$, the window density of types is
$(1-2\alpha)^{-1}$, which shifts the per-model constant to $\sqrt{2e/(1-2\alpha)}$ ---
matching its upper bound at the same $\alpha$ --- with infimum $\sqrt{2e}$ over the
family as $\alpha\to0$; Conjecture~\ref{conj:main} states only the family-wide
inequality.
\end{remark}

\subsection{Unconditional estimates at large overlap}

\begin{lemma}[Anticoncentration]\label{lem:anti}
Let $\xi_1,\dots,\xi_\kappa$ be independent Bernoulli variables with parameters
$p_i\in[p_0,1-p_0]$ and $S_\kappa=\sum_i\xi_i$. Then
$\sup_t\PP(S_\kappa=t)\le C_0/\sqrt{\kappa\,p_0(1-p_0)}$ with $C_0$ absolute.
\end{lemma}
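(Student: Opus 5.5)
We reduce Lemma~\ref{lem:anti} to a Fourier (Esseen-type) bound on the characteristic function of $S_\kappa$. Start from the inversion formula
\[
  \PP(S_\kappa=t)=\frac1{2\pi}\int_{-\pi}^{\pi}e^{-i\theta t}\prod_{j=1}^{\kappa}\bigl(1-p_j+p_je^{i\theta}\bigr)\,d\theta
  \;\le\;\frac1{2\pi}\int_{-\pi}^{\pi}\prod_{j=1}^{\kappa}\bigl|1-p_j+p_je^{i\theta}\bigr|\,d\theta .
\]
This bound is uniform in $t$, so it controls the supremum directly.

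For each factor we have the exact identity
\[
  |1-p+pe^{i\theta}|^2=1-2p(1-p)(1-\cos\theta)=1-4p(1-p)\sin^2(\theta/2).
\]
Since $p\in[p_0,1-p_0]$ and $x\mapsto x(1-x)$ is minimised at the endpoints of this interval, $p(1-p)\ge p_0(1-p_0)=:\sigma^2$. Using $1-u\le e^{-u}$ and $|\sin(\theta/2)|\ge|\theta|/\pi$ on $[-\pi,\pi]$, each factor is at most $\exp(-2\sigma^2\theta^2/\pi^2)$. Taking the product and extending the integral to all of $\mathbb R$ gives
\[
  \PP(S_\kappa=t)\le\frac1{2\pi}\int_{\mathbb R}e^{-2\kappa\sigma^2\theta^2/\pi^2}\,d\theta
  =\frac1{2\pi}\cdot\frac{\pi^{3/2}}{\sqrt{2\kappa\sigma^2}}
  =\frac{\sqrt{\pi/8}}{\sqrt{\kappa\,p_0(1-p_0)}},
\]
so the lemma holds with $C_0=\sqrt{\pi/8}<1$. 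This constant is absolute, as claimed.

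Nothing here is a real obstacle: the argument is a standard computation. The only points needing care are the uniform lower bound on $p(1-p)$ over $[p_0,1-p_0]$, and the use of $|\sin(x)|\ge 2|x|/\pi$ for $|x|\le\pi/2$. The latter keeps the Gaussian domination valid on the whole period, not just near $\theta=0$. An alternative route would be Kolmogorov--Rogozin or an LCD/Erd\H{o}s--Littlewood--Offord argument, but the Fourier bound is shortest and gives an explicit constant. It also holds for every $\kappa\ge1$ with no case split, because for small $\kappa$ the right side already exceeds $1$ or remains a valid bound.
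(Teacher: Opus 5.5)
Your proof is correct and is essentially the paper's own argument. Both use Fourier inversion, the identity $|\E e^{i\theta\xi_j}|^2=1-2p_j(1-p_j)(1-\cos\theta)$, the bound $1-\cos\theta\ge2\theta^2/\pi^2$ on $[-\pi,\pi]$ (your $|\sin(\theta/2)|\ge|\theta|/\pi$), and extension to a Gaussian integral over $\mathbb R$. The only addition is that you evaluate the constant explicitly as $C_0=\sqrt{\pi/8}$, which the paper leaves implicit.
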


\begin{proof}
$|\E e^{i\theta\xi_j}|^2=1-2p_j(1-p_j)(1-\cos\theta)\le
\exp(-2p_0(1-p_0)(1-\cos\theta))$. By Fourier inversion and $1-\cos\theta\ge2\theta^2/
\pi^2$ on $[-\pi,\pi]$,
$\PP(S_\kappa=t)\le\frac1{2\pi}\int_{-\pi}^{\pi}
e^{-\kappa p_0(1-p_0)(1-\cos\theta)}\,d\theta
\le\frac1{2\pi}\int_{-\infty}^{\infty}e^{-2\kappa p_0(1-p_0)\theta^2/\pi^2}\,d\theta$,
which is the stated bound.
\end{proof}

\begin{lemma}[Large overlap, unconditional]\label{lem:bigoverlap}
There is $C_1=C_1(p_0)$ such that for all $S,S'\subseteq B$ with $|S|=|S'|=y$ and
$\kappa=|S\cap S'|\ge2$,
\[
  \PP(A_S\cap A_{S'}\mid x)\;\le\;\PP(A_S\mid x)\cdot
  y\,\Bigl(\frac{C_1}{\sqrt\kappa}\Bigr)^{y-\kappa},
\]
with the convention $(\,\cdot\,)^0=1$, so that the case $\kappa=y$ (full overlap) is
included.
\end{lemma}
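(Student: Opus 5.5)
We condition on the edges inside $S$ and then use anticoncentration on the fresh edges of $S'$. Write $T:=S\cap S'$ and $R:=S'\setminus S$, so that $|R|=y-\kappa$. When $\kappa=y$ the claim reads $\PP(A_S)\le y\,\PP(A_S)$ and is trivial, so assume $\kappa<y$. Since $A_S\cap A_{S'}$ is contained in $A_S$ together with the event that $G[S']$ is regular, and $G[S']$ can be regular of only $y$ possible degrees $t\in\{0,\dots,y-1\}$, a union bound over $t$ accounts for the factor $y$. It therefore suffices to prove, for each fixed $t$,
\[
\PP\bigl(A_S\cap\{G[S']\ \text{$t$-regular}\}\mid x\bigr)\le \PP(A_S\mid x)\Bigl(\frac{C_1}{\sqrt\kappa}\Bigr)^{y-\kappa}.
\]

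Next we condition on $E_S$, the full edge configuration inside $S$. This configuration determines $A_S$, and it also determines the degrees within $T$ of every vertex of $T$. Only the remaining edges matter for $G[S']$ being $t$-regular. For each $r\in R$, consider the edges from $r$ to $T$: these are $\kappa$ independent Bernoulli variables, and none of them lies inside $S$. For distinct $r$ these edge sets are disjoint, and they are disjoint from the edges inside $R$.

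We then reveal the edges inside $R$ as well. For $G[S']$ to be $t$-regular, each $r\in R$ must satisfy
\[
\deg_T(r)=t-\deg_R(r).
\]
Given $E_S$ and $E_R$, the target on the right is fixed, and the variables $\deg_T(r)$ for $r\in R$ are independent sums of $\kappa$ Bernoulli variables with parameters $W(x_r,x_u)\in[p_0,1-p_0]$. Lemma~\ref{lem:anti} bounds each such point probability by $C_0/\sqrt{\kappa p_0(1-p_0)}$.

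Taking the product over the $y-\kappa$ vertices of $R$ gives $(C_1/\sqrt\kappa)^{y-\kappa}$ with $C_1:=C_0/\sqrt{p_0(1-p_0)}$. The constraints on the degrees of vertices of $T$ involve the $T$–$R$ edges jointly, but we simply drop them, since dropping constraints only enlarges the event. Averaging over $E_R$ and then over $E_S$ restricted to $A_S$ yields the displayed bound.

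The main point requiring care is independence. The variables $\deg_T(r)$ for different $r$ use disjoint edge sets, and these sets are disjoint from both $E_S$ and $E_R$. Conditional on $x$, all edges are independent, so the conditioning is legitimate. Beyond this check the argument is routine; no correlation input is needed.
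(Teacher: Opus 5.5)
Your proof is correct and follows essentially the same route as the paper. You condition on the edges inside $S$ (equivalently on $E_T$) together with the edges inside $S'\setminus T$, apply Lemma~\ref{lem:anti} independently to the $\kappa$ fresh $T$--$(S'\setminus T)$ edges at each of the $y-\kappa$ outside vertices, and take a union over the at most $y$ common degrees. The only differences are cosmetic: you do the union over degrees before conditioning, and you treat $\kappa=y$ separately instead of through the empty-product convention.
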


\begin{proof}
Let $T=S\cap S'$. The event $A_S$ is determined by the edges inside $S$, and
$\PP(A_{S'}\mid E_S)=\PP(A_{S'}\mid E_T)$ because the edges inside $S'$ meet those inside
$S$ exactly in $E_T$. It therefore suffices to bound $\PP(A_{S'}\mid E_T=e)$ uniformly in
$e$. Condition further on all edges inside $S'$ \emph{except} those between $T$ and
$S'\setminus T$. If $A_{S'}$ holds with common degree $D$, then every $v\in S'\setminus
T$ satisfies $\sum_{u\in T}\xi_{uv}=D-r_v$, where $r_v$ is determined by the conditioned
edges. The sums $\sum_{u\in T}\xi_{uv}$, $v\in S'\setminus T$, involve pairwise disjoint
sets of edges, hence are independent, and each is a sum of $\kappa$ independent
Bernoullis with parameters in $[p_0,1-p_0]$. By Lemma~\ref{lem:anti} and a union over the
at most $y$ possible values of $D$,
$\PP(A_{S'}\mid\cdot)\le y\,(C_1/\sqrt\kappa)^{y-\kappa}$, uniformly in everything
conditioned on. Averaging completes the proof.
\end{proof}

\begin{lemma}[Hypergeometric tail]\label{lem:hyp}
Let $S,S'$ be independent uniform $y$-subsets of an $M$-set with $M\ge3y$, and
$h(\kappa):=\PP(|S\cap S'|=\kappa)$. Then
$h(\kappa)\le\bigl(e\tilde\mu/\kappa\bigr)^\kappa$ with
$\tilde\mu:=y^2/(M-2y)$.
\end{lemma}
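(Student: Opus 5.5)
The plan is to write the law of $\kappa=|S\cap S'|$ in closed form and bound it term by term. Fix the set $S$; by symmetry the distribution of $|S\cap S'|$ does not depend on which $y$-set $S$ is, and $|S\cap S'|$ is hypergeometric:
\[
  h(\kappa)=\frac{\binom{y}{\kappa}\binom{M-y}{y-\kappa}}{\binom{M}{y}}.
\]
I will bound the three binomial factors separately. The goal is to extract exactly $\kappa$ factors of size about $y^2/(M-2y)$, together with a $1/\kappa!$.

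For the first factor, use $\binom y\kappa\le y^\kappa/\kappa!$. For the ratio, write
\[
  \frac{\binom{M-y}{y-\kappa}}{\binom My}
  =\frac{\binom{M-y}{y-\kappa}}{\binom{M}{y-\kappa}}\cdot\frac{\binom{M}{y-\kappa}}{\binom My}.
\]
The first quotient is at most $1$, since $M-y\le M$. The second quotient equals
\[
  \prod_{j=0}^{\kappa-1}\frac{y-j}{M-y+\kappa-j},
\]
which can be seen by stepping $\binom{M}{r}/\binom{M}{r+1}=(r+1)/(M-r)$ from $r=y-\kappa$ up to $r=y-1$. Each factor is at most $y/(M-y)$, because $\kappa-j\ge1>0$. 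Therefore
\[
  h(\kappa)\le\frac{1}{\kappa!}\Bigl(\frac{y^2}{M-y}\Bigr)^{\kappa}\le\frac{\tilde\mu^{\kappa}}{\kappa!}.
\]
This holds since $M-y\ge M-2y>0$, and that positivity is where $M\ge3y$ is used.

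It remains to apply $\kappa!\ge(\kappa/e)^\kappa$, which follows from $e^\kappa\ge\kappa^\kappa/\kappa!$ (one term of the exponential series). This gives $h(\kappa)\le(e\tilde\mu/\kappa)^\kappa$. The case $\kappa=0$ holds trivially with the convention $0^0=1$.

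The only step needing any care is the index bookkeeping in the product telescoping. The slack between $M-y$ and $M-2y$ shows that the lemma's denominator is chosen conservatively, so even a slightly lossier bound on the factors (for instance $(y-j)/(M-y-j)\le y/(M-2y)$, valid for $j<\kappa\le y$) still suffices. There is no real obstacle.
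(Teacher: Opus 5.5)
Your proof is correct and is essentially the paper's argument: both take the factor $\binom y\kappa$ for the position of the overlap inside $S$, bound the remaining probability by a product of $\kappa$ ratios each at most $y/(M-2y)$, and finish with $\binom y\kappa\le(ey/\kappa)^\kappa$ (your $y^\kappa/\kappa!$ combined with $\kappa!\ge(\kappa/e)^\kappa$). The only difference is in how the first step is set up. The paper uses a union bound, $h(\kappa)\le\PP(|S\cap S'|\ge\kappa)\le\binom y\kappa\max_{|A|=\kappa}\PP(A\subseteq S')$ with $\PP(A\subseteq S')=\prod_{i=0}^{\kappa-1}\frac{y-i}{M-i}$, which also controls the upper tail. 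You instead start from the exact hypergeometric mass function and drop the constraint that $S'\setminus S$ avoid $S$ via $\binom{M-y}{y-\kappa}\le\binom{M}{y-\kappa}$, which gives the marginally sharper intermediate bound $\frac1{\kappa!}\bigl(\frac{y^2}{M-y}\bigr)^\kappa$.
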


\begin{proof}
By the union bound, $h(\kappa)\le\PP(|S\cap S'|\ge\kappa)\le
\binom y\kappa\max_{|A|=\kappa}\PP(A\subseteq S')$, and for a fixed $\kappa$-set $A$,
$\PP(A\subseteq S')=\prod_{i=0}^{\kappa-1}\frac{y-i}{M-i}\le
\bigl(\frac{y}{M-2y}\bigr)^\kappa$; with $\binom y\kappa\le(ey/\kappa)^\kappa$ the claim
follows.
\end{proof}

\subsection{Proof of Theorem~\ref{thm:cond}}

\begin{proof}[Proof of Theorem~\ref{thm:cond}]
Fix $\epsilon\in(0,1/8)$, $w=n^{-1/4}$, $y=\lceil n^{1/2-\epsilon}\rceil$, and work
conditionally on a type profile in the a.a.s.\ event on which $M=n^{3/4}(1+o(1))$ and
Hypotheses~\ref{hyp:llt} and~\ref{hyp:cor} hold. Note
$\tilde\mu=y^2/(M-2y)=n^{1/4-2\epsilon}(1+o(1))$ and set
$\kappa_3:=\lceil\tilde\mu\log n\rceil$ and $\kappa_2:=\lfloor y/\sqrt{\log n}\rfloor$,
so $\kappa_3<\kappa_2$ for large $n$.

By Hypothesis~\ref{hyp:llt}, $\PP(A_S\mid x)>0$ for every $S$, so the \emph{weighted}
count
\[
  \widetilde X:=\sum_{S\subseteq B,\,|S|=y}\frac{\mathbf 1_{A_S}}{\PP(A_S\mid x)}
\]
is well defined; weighting removes any dependence on the variation of
$\PP(A_S\mid x)$ across $S$ (this variation was a gap in the v1 argument). Then
$\E[\widetilde X\mid x]=N:=\binom My$ exactly, and
\[
  \E[\widetilde X^2\mid x]=\sum_{S,S'}\rho(S,S'),\qquad
  \rho(S,S'):=\frac{\PP(A_S\cap A_{S'}\mid x)}{\PP(A_S\mid x)\PP(A_{S'}\mid x)},
\]
so that, grouping by $\kappa=|S\cap S'|$ and writing
$\bar\rho(\kappa)$ for the maximum of $\rho$ at overlap $\kappa$,
$\E[\widetilde X^2\mid x]\le N^2\sum_{\kappa=0}^{y}h(\kappa)\bar\rho(\kappa)$
with $h$ as in Lemma~\ref{lem:hyp}. We bound the sum in three ranges.

\emph{Range $\kappa\le\kappa_3$.} For $\kappa\le1$ the events involve disjoint edge
sets, so $\rho=1$. For $2\le\kappa\le\kappa_3$, Hypothesis~\ref{hyp:cor} together with
$\kappa^2/y\le\kappa_3^2/y=O(n^{-3\epsilon}\log^2n)\to0$ gives
$\bar\rho(\kappa)\le1+o(1)$ uniformly. Hence the whole range contributes
$\sum_{\kappa\le\kappa_3}h(\kappa)\bar\rho(\kappa)\le(1+o(1))\sum_\kappa h(\kappa)
=1+o(1)$.

\emph{Range $\kappa_3<\kappa\le\kappa_2$.} By Hypothesis~\ref{hyp:cor} and
Lemma~\ref{lem:hyp}, using $\log(\kappa/\tilde\mu)\ge\log\log n$ on this range and
$e^{C\kappa^2/y}\le e^{C\kappa/\sqrt{\log n}}$,
$h(\kappa)\bar\rho(\kappa)\le e^{-\kappa(\log\log n-1)}\cdot(1+o(1))\,
e^{C\kappa/\sqrt{\log n}}\le e^{-\kappa}$ for large $n$; the contribution is
$\sum_{\kappa>\kappa_3}e^{-\kappa}=o(1)$.

\emph{Range $\kappa_2<\kappa\le y$.} By Lemma~\ref{lem:bigoverlap} and
Hypothesis~\ref{hyp:llt} (with the drift bound $o(y)$ as in
Proposition~\ref{prop:first}),
\[
  \bar\rho(\kappa)\le
  y\Bigl(\frac{C_1}{\sqrt\kappa}\Bigr)^{y-\kappa}\cdot
  \Bigl(\frac{\sqrt y}{c}\Bigr)^{y-1}e^{o(y)} .
\]
Taking logarithms and using $\log\kappa\ge\log y-\tfrac12\log\log n$ on this range,
\[
  -\frac{y-\kappa}2\log\kappa+\frac y2\log y
  =\frac\kappa2\log y+\frac{y-\kappa}2(\log y-\log\kappa)
  \le\frac\kappa2\log y+\frac y4\log\log n,
\]
while Lemma~\ref{lem:hyp} gives
$\log h(\kappa)\le-\kappa\bigl(\log(\kappa/\tilde\mu)-1\bigr)\le
-\kappa\bigl(\tfrac14+\epsilon\bigr)\log n\,(1-o(1))$, since
$\kappa/\tilde\mu\ge\kappa_2/\tilde\mu=n^{1/4+\epsilon-o(1)}$. Since
$\tfrac\kappa2\log y=\kappa\bigl(\tfrac14-\tfrac\epsilon2\bigr)\log n\,(1+o(1))$,
\[
  \log\bigl[h(\kappa)\bar\rho(\kappa)\bigr]\;\le\;
  -\tfrac32\epsilon\,\kappa\log n\,(1-o(1))+\tfrac y4\log\log n+O(y)
  \;\le\;-\epsilon\,y\sqrt{\log n}
\]
for large $n$, using $\kappa\ge\kappa_2=y/\sqrt{\log n}$, so that
$\kappa\log n\ge y\sqrt{\log n}\gg y\log\log n$; the $O(y)$ term collects
$(y-\kappa)\log C_1$, $(y-1)\log(1/c)$, $\log y$ and the $o(y)$ drift, and is absorbed
the same way, since $y=o(\kappa\log n)$ uniformly on this range. Summing over the at most $y$ values of
$\kappa$ in this range gives $o(1)$.

Altogether $\E[\widetilde X^2\mid x]\le(1+o(1))N^2$, and by Cauchy--Schwarz
(Paley--Zygmund at zero),
\[
  \PP\bigl(F(G)\ge y\mid x\bigr)\ \ge\ \PP(\widetilde X>0\mid x)\ \ge\
  \frac{(\E[\widetilde X\mid x])^2}{\E[\widetilde X^2\mid x]}\ \ge\ 1-o(1).
\]
Since the conditioning event has probability $1-o(1)$ over the types, the theorem
follows.
\end{proof}

\begin{remark}[Asymptotic character]
Theorem~\ref{thm:cond} is purely asymptotic: in the large-overlap range the $o(1)$'s
decay like powers of $1/\sqrt{\log n}$, so the estimates above acquire numerical content
only at very large $n$. We flag this because the paper is otherwise careful to separate
finite-$n$ computation from asymptotic assertion.
\end{remark}

\begin{remark}[Consistency of scales]
The window width $w=n^{-1/4}$ and size $y\approx n^{1/2}$ found by the optimisation of
Remark~\ref{rem:constant} match the scale $s^2=\Theta(1)$, i.e.\
$|y_i|\approx k^{-1/2}$ over $k\approx n^{1/2}$ vertices, that dominates the Gaussian
integral of Section~\ref{sec:mech}. The two sides of the program identify the same
critical configuration; this consistency is a heuristic, not evidence of correctness of
either side.
\end{remark}

\section{Numerical evidence}\label{sec:num}

The mechanism behind Conjecture~\ref{conj:main} predicts, for the model with types
uniform on $[\frac12-h,\frac12+h]$ and $W(x,y)=(x+y)/2$, that optimising the window
against the drift gives
\[
  F(n,h)\;\asymp\;\min\bigl(n^{2/3},\ \sqrt{n/h}\bigr),
\]
the two branches crossing at $h=n^{-1/3}$; the first branch is the $G(n,p)$ behaviour of
\cite{KSW11}, toward which the model degenerates as $h\to0$. (This displayed crossover is
also the correct form of a statement garbled in v1, which asserted
``$F\ge cL^{-1/2}\sqrt n$ recovers $\Theta(n^{2/3})$ as $L\to0$'' --- literally false as
written; $L$ must moreover be read as the \emph{least} Lipschitz constant, since every
graphon is $L$-Lipschitz for every larger $L$.)

We computed $F$ exactly, certified by a constraint solver, at $108$ points of the
complete grid $n\in\{28,32,36,40,44,48\}$, $h\in\{0,0.05,0.12,0.20,0.30,0.40\}$, three
certified instances per cell (the figure legend denotes the order by $m$).
Figure~\ref{fig:trans} and Table~\ref{tab:trans} report the outcome: the ratio
$F/\min(n^{2/3},\sqrt{n/h})$ has mean $1.295$; per-instance values lie in $[1.08,1.65]$
and per-cell means in $[1.19,1.43]$, \emph{including across the change of branch}. At
$n=48$, for instance, the prediction switches from $n^{2/3}=13.2$ at $h=0.20$ to
$\sqrt{n/h}=11.0$ at $h=0.40$, and the measured $F$ falls correspondingly from $16$ to
$14$. Comparing instead with the conjectured constant, $F\approx\sqrt{2en}$, the measured
values at $h\ge0.25$ give ratios between $0.75$ and $1.10$ for $32\le n\le48$.

Three cautions, which we owe to the critique of v1. First, computations at orders
$n\le48$ can illustrate a predicted crossover but cannot validate
Hypotheses~\ref{hyp:llt}--\ref{hyp:cor}, the conjectured constant, or any asymptotic
statement. Second, the superimposed curve in Figure~\ref{fig:trans} is scaled by a global
factor $\approx1.3$ which is \emph{fitted}, not predicted; what is parameter-free is the
shape, in particular the location of the crossover and the flatness of the ratio across
it. Third, selection effects: averages over certified instances could in principle be
biased by which instances time out; here the issue does not arise because the grid is
complete --- all $108$ instances certified, none timed out --- with every witness
verified by hand and an \textsc{infeasible} verdict counting as proof. (The v1 text cited
a stale count of $106$ points on a grid extending to order $56$; the numbers, table and
figure here are regenerated directly from the certified dataset.)

\begin{figure}[ht]
\centering
\includegraphics[width=0.92\textwidth]{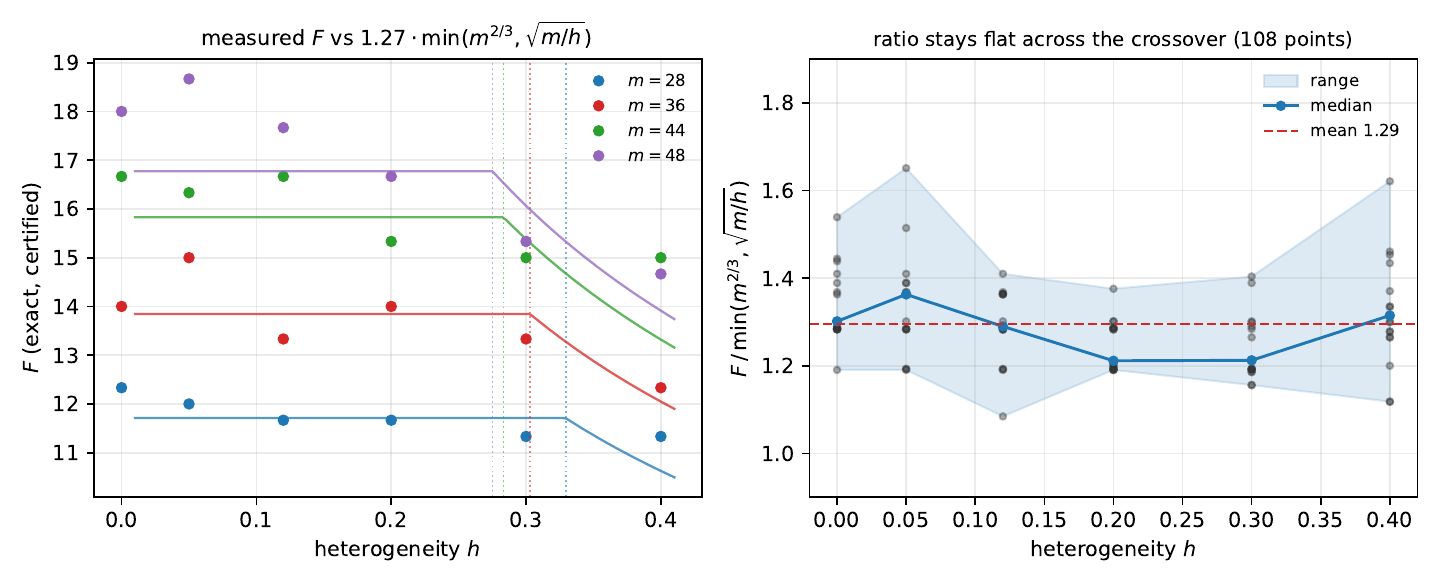}
\caption{Left: exact $F$ against the heterogeneity $h$ for several orders, with the
prediction $\min(n^{2/3},\sqrt{n/h})$ superimposed, scaled by a fitted global factor
$\approx1.3$ (the fit shown is $1.27$); the kink is the crossover between the
entropy-limited and the drift-limited branch. Right: the ratio $F/\min(n^{2/3},\sqrt{n/h})$ for all $108$ certified
measurements, with the median per value of $h$ and the observed range; it stays flat
across the crossover, which is what a correct parameter-free \emph{shape} should do.}
\label{fig:trans}
\end{figure}

\begin{table}[ht]
\centering
\begin{tabular}{rcccccc}
\hline
$n$ & $h=0$ & $h=0.05$ & $h=0.12$ & $h=0.20$ & $h=0.30$ & $h=0.40$\\ \hline
$28$ & $1.34$ & $1.30$ & $1.26$ & $1.26$ & $1.23$ & $1.35$\\
$32$ & $1.29$ & $1.32$ & $1.26$ & $1.19$ & $1.29$ & $1.23$\\
$36$ & $1.28$ & $1.38$ & $1.22$ & $1.28$ & $1.22$ & $1.30$\\
$40$ & $1.40$ & $1.34$ & $1.31$ & $1.23$ & $1.24$ & $1.27$\\
$44$ & $1.34$ & $1.31$ & $1.34$ & $1.23$ & $1.24$ & $1.43$\\
$48$ & $1.36$ & $1.41$ & $1.34$ & $1.26$ & $1.21$ & $1.34$\\
\hline
\end{tabular}
\caption{$F/\min(n^{2/3},\sqrt{n/h})$, mean over the three certified instances per cell.
Entries are stable across a change of the binding branch of the prediction.}
\label{tab:trans}
\end{table}

\section{Concluding remarks}\label{sec:concl}

\subsection{Why $\sqrt n$ is the ceiling of this scheme}
Heuristically: the union bound over $k$-sets contributes $e^{k\log(n/k)}$ while the
per-set failure probability is $e^{-\Theta(k\log k)}$ (one $\sqrt{\ }$-fluctuation per
vertex); these balance at $n\asymp k^2$ irrespective of the curvature constants. Any
first-moment argument over this family therefore stops at $f(n)\le A\sqrt n$, and the
content of Theorem~\ref{thm:DM} together with Conjecture~\ref{conj:main} would be that
the optimal $A$ is $\sqrt{2e}$.

\subsection{Consequences for the Erd\H{o}s--Fajtlowicz--Staton problem}
If Conjecture~\ref{conj:main} holds, then within the family of
Definition~\ref{def:model} the answer is $\Theta(\sqrt n)$ with optimal constant
$\sqrt{2e}$, and any improvement of the upper bound for $f(n)$ must come from a
construction outside that family --- bearing in mind the scope caveat of
Section~\ref{sec:intro}. The proof strategy for the conjecture uses two ingredients: a
block with controlled degree drift, which is available in any graph by pigeonholing
degree bands, and a local limit theorem for the joint degree vector
(Hypothesis~\ref{hyp:llt}), which uses independence of the edges. For a deterministic $G$
one may still randomise the \emph{subset}; what is missing is the joint anticoncentration
statement, with the additional twist that the condition is imposed only on the
coordinates lying in $S$.

A small experiment locates the difficulty. Counting exhaustively, for graphs on $20$
vertices, the number $N_y$ of $y$-subsets inducing a regular subgraph and comparing it
with the generic first-moment prediction $\binom {20}y(2\pi\sigma^2)^{-(y-1)/2}$ (with
$\sigma^2$ the variance of a degree, here $\approx y/4$), one finds: for $G(20,1/2)$ the prediction is accurate to within a factor $e^{\pm0.7}$ for
every $y\le8$; forcing all degrees to be even changes nothing; but for a disjoint union
of cliques of sizes $2,\dots,6$ there are \emph{exactly zero} regular $7$-subsets --- a
regular induced subgraph there is a disjoint union of equal cliques, and $7=j(d+1)$
forces $d+1\in\{1,7\}$, neither of which is available --- while $y=8=4\cdot2=2\cdot4$
admits $4595$. In these experiments the generic count is correct exactly when no arithmetic obstruction
is present.

It is worth recording that the relevant technology exists in a neighbouring problem.
Kwan, Sah, Sauermann and Sawhney \cite{KSSS23} control the distribution of $e(G[U])$ for
a random subset $U$ of a Ramsey graph --- and hence resolve the Erd\H{o}s--McKay
conjecture --- by means of an additive-structure dichotomy on the degree sequence,
formulated through the regularized least common denominator: if it is small, the vertices
fall into few buckets of almost equal degree, and if it is large, a local central limit
theorem is available. The first alternative is exactly the regime in which a graph is
close to a blow-up, where large regular induced subgraphs are easy to exhibit; the second
is exactly what Hypothesis~\ref{hyp:llt} requires. The gap to be bridged is that
\cite{KSSS23} controls a single quadratic statistic, whereas a regular induced subgraph
imposes $|S|$ simultaneous conditions on the degree vector.

\subsection*{Code availability}
The numerical checks reported here --- the determinant consistency check of
Section~\ref{sec:mech} and the certified computation of the $108$ values of $F$ behind
Figure~\ref{fig:trans} and Table~\ref{tab:trans} --- together with the raw data, are
available at
\url{https://gist.github.com/arielelevy/c9cf5b142951db0c7d179a3656098835}.

\subsection*{Acknowledgements}
This version owes its existence to a detailed critique of the first version by
P.~W.~Dyson and B.~D.~McKay, whom I thank for their care and generosity --- including
their decision to leave the lower-bound direction to this program. The positivity
argument of Lemma~\ref{lem:pos} and the counterexamples of Remarks~\ref{rem:c5}
and~\ref{rem:specfalse} are theirs. The analysis and the numerical verifications were
developed with substantial assistance from Claude (Anthropic); all derivations were
checked by hand and against exact computations.

\end{document}